\theoremstyle{plain}
\newtheorem{theorem}{Theorem}[section]
\newtheorem{lemma}[theorem]{Lemma}
\newtheorem{proposition}[theorem]{Proposition}
\newtheorem{corollary}[theorem]{Corollary}
\newtheorem{conjecture}{Conjecture}
\theoremstyle{definition}
\theoremstyle{remark}
\newcommand{\R}{\mathbb{R}}
\newcommand{\Q}{\mathbb{Q}}
\newcommand{\Z}{\mathbb{Z}}
\newcommand{\G}{\Gamma}
\newcommand{\gr}{\mathrm{gr}}
\newcommand{\diam}{\mathrm{diam}}
\newcommand{\D}{\mathcal{D}}
\newcommand{\Cay}{\mathrm{Cay}}
\newcommand{\Aut}{\mathrm{Aut}}
\begin{document}
\title{Self $2$-distance graphs}
\author{A. Azimi}
\author{M. Farrokhi D. G.}

\address{Department of Pure Mathematics, Ferdowsi University of Mashhad, Mashhad, Iran.}
\email{ali.azimi61@gmail.com}
\address{Mathematical Science Research Unit, College of Liberal Arts, Muroran Institute of Technology, 27-1, Mizumoto, Muroran 050-8585, Hokkaido, Japan.}
\email{m.farrokhi.d.g@gmail.com}

\keywords{Distance graph, regular graph, forbidden subgraph}
\subjclass[2000]{Primary 05C12; Secondary 05C60, 05C76.}

\begin{abstract}
All finite simple self $2$-distance graphs with no $4$-cycle, diamond, or triangles with a common vertex are determined. Utilizing these results, it is shown that there is no cubic self $2$-distance graphs.
\end{abstract}
\maketitle
\section{Introduction}
Let $(X,\rho)$ be a metric space and $D$ be a set of positive real numbers. The \textit{distance graph} $G(X,D)$ of $X$ with respect to a distance set $D$ is the graph whose vertex set is $X$ and two distinct vertices $x$ and $y$ are adjacent if $\rho(x,y)\in D$. 

The well-known unit distance graph $G(\R^2,\{1\})$ is the first instance of a distance graph arising from a question of Edward Nelson about its chromatic number in 1950 (see \cite[Chapter 3]{as}). It is shown by Nelson and Isbell \cite{ji-en}, Moser and Moser \cite{lm-wm} and Hadwiger, Debrunner and Klee \cite{hh-hd-vk} that the chromatic number of this graph is between $4$ and $7$. Unit distance graphs are also investigated on any of the sets $\R^n$, $\Q^n$ and $\Z^n$ as well (see \cite{as} for a detailed history). The other well-studied sort of distance graphs are the distance graphs $G(\Z,D)$ introduced by Eggleton, Erd\"{o}s and Skiltons in \cite{rbe-pe-dks}, where $D$ is a set of positive integers. Clearly, every graph $\G$ with associated distance function $d$ defines a metric space $(\G,d)$. Hence, we may define the distance graphs of the graph $\G$ with respect to a set of positive integer distances. For example, the $n$th power of a graph $\G$ is defined simply as the distance graph $G(\G,\{1,\ldots,n\})$. We refer the interested reader to the survey articles \cite{kbc,ddl,fk-hk} for further details concerning the mentioned three kinds of distance graphs, respectively. 

The \textit{$n$th distance graph} (or \textit{$n$-distance graph}) of a graph $\G$ is defined simply as $\G_n:=G(V(\G),\{n\})$. The study of $n$th distance graph initiated by  Simi\'{c} \cite{sks} while solving the graph equation $\G_n\cong L(\G)$, where $L(\G)$ is line graph of $\G$. Regarding the same problem, we have classified of all graphs whose $2$-distance graphs are path or cycle in \cite{aa-mfdg}.

A graph is said to be \textit{self $n$-distance graph} if it is isomorphic to its $n$-distance graph. The aim of this paper is to investigate self $2$-distance graphs under some conditions. More precisely, we will show that self $2$-distance graphs with no squares or disjoin triangles are either odd cycles of order $\geq5$ or the edge product $C_5|C_3$. Also, we show that a self $2$-distance graph with no diamond is either an odd cycle of order $\geq5$, the edged product $C_5|C_3$, or it is isomorphic to one of graphs in Figures 5.1.1 or 5.1.2. One note that our knowledge about $n$-distance graphs can be used to answer/pose some problems in groups through their Cayley graphs. Indeed, we may observe that the $n$th distance graph of a Cayley graph $\Cay(G,S)$ of $G$ equals $\Cay(G,S^n\setminus S)$ and hence it is itself a Cayley graph. Any isomorphism between $\Cay(G,S)$ and $\Cay(G,S^n\setminus S)$ give the constraint $|S^n|<2|S|$ on $S$, the problem which is the subject of resent research. On the other hand, such an isomorphism brings us the question whether $S^n\setminus S$ and $S$ are conjugate via an automorphism of $G$, which is a central problem in the theory of Cayley graphs. In case $S^n\setminus S=S^\theta$ for some $\theta\in\Aut(G)$, we have obviously $\Cay(G,S^n\setminus S)\cong\Cay(G,S)$, that is, $\Cay(G,S)$ is a self $n$th distance graph.

Throughout this paper, we use the following notations: The maximum degree of vertices of a graph $\G$ is denote by $\Delta(\G)$ and $N_{\G}(v)$ illustrates the set of all neighborhoods of the vertex $v$ in $\G$. Also, $\nabla(\G)$ denotes the number of triangles in a graph $\G$. All graphs in this papers are finite simple graphs with no multiple edges. Remind that a \textit{diamond} is the edge product $\D=C_3|C_3$, where the \textit{edged product} of two edge-transitive graphs $\G_1$ and $\G_2$ is obtained by identification of an edge from $\G_1$ and $\G_2$.
\section{Preliminary results}
We begin with a simple query about the existence of self $2$-distance graphs. Clearly, any odd cycle of length $\geq5$ is a self $2$-distance graph. As we shall see later, odd cycles are exceptional examples in the class of all self $2$-distance graphs. We note that the class of self $2$-distance graphs is broad as Propositions \ref{selfcomplementary} and \ref{inducedsubgraph} provide ample of them. The following simple key lemma plays an important role in our study.
\begin{lemma}\label{diameter}
Let $\G$ be a graph. Then $\diam(\G)=2$ if and only if $\G_2=\G^c$.
\end{lemma}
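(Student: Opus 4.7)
The plan is a direct unwinding of the definitions, handled separately in each direction. Both $\G_2$ and $\G^c$ share the vertex set $V(\G)$, so the equality $\G_2=\G^c$ reduces to comparing their edge sets.

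For the forward direction, I would assume $\diam(\G)=2$ and fix two distinct vertices $u,v$. If $\{u,v\}$ is an edge of $\G^c$, then $u$ and $v$ are non-adjacent in $\G$; the diameter hypothesis then forces the distance between them in $\G$ to be exactly $2$, so $\{u,v\}$ is an edge of $\G_2$. Conversely, any edge of $\G_2$ joins two vertices at distance $2$, hence non-adjacent in $\G$, hence adjacent in $\G^c$. This gives $\G_2=\G^c$.

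For the reverse direction, I would assume $\G_2=\G^c$ and pick any two distinct vertices $u,v$. Either they are adjacent in $\G$, giving distance $1$, or they are non-adjacent, in which case $\{u,v\}$ is an edge of $\G^c=\G_2$, meaning $d_{\G}(u,v)=2$. In particular, the distance between any two vertices is finite (so $\G$ is connected) and bounded by $2$, hence $\diam(\G)\leq 2$. To upgrade this to equality one only needs a single pair of non-adjacent vertices, i.e.\ $\G$ is not complete.

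There is essentially no obstacle here beyond bookkeeping; the only mildly subtle point is the degenerate complete-graph case (where $\G^c$ and $\G_2$ are both edgeless but $\diam(\G)=1$), and I expect the statement is being applied with the tacit understanding that $\G$ is non-complete, or equivalently with $\diam(\G)=2$ read as ``$\diam(\G)\le 2$ and $\G\neq K_n$''. Once this convention is fixed, both directions are one-line definitional checks.
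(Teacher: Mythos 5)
Your proof is correct and is exactly the definitional argument the paper has in mind; the paper in fact states Lemma \ref{diameter} without any proof, treating it as immediate. Your observation about the complete-graph degeneracy (where $\G_2=\G^c$ holds vacuously but $\diam(\G)=1$) is a fair catch --- the ``if'' direction as literally stated needs $\G$ non-complete --- but this has no bearing on how the lemma is used later in the paper.
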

\begin{proposition}\label{selfcomplementary}
Let $\G$ be a self-complementary graph with diameter two. Then $\G_2\cong\G$.
\end{proposition}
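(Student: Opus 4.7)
The plan is to derive this proposition as an immediate consequence of Lemma \ref{diameter} combined with the definition of a self-complementary graph. First I would invoke the hypothesis that $\diam(\Gamma)=2$ and apply Lemma \ref{diameter} to conclude that $\Gamma_2=\Gamma^c$ as graphs on the vertex set $V(\Gamma)$. Then I would use the hypothesis that $\Gamma$ is self-complementary, which by definition means $\Gamma^c\cong\Gamma$. Composing these two facts gives $\Gamma_2=\Gamma^c\cong\Gamma$, which is exactly the desired conclusion.

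There is essentially no obstacle here. The statement is a two-line corollary of Lemma \ref{diameter}, and the only content is packaging the defining property of self-complementarity together with the lemma's characterization of diameter-two graphs. The proposition's role in the paper is presumably to supply a broad supply of examples of self $2$-distance graphs, as advertised in the paragraph preceding the lemma, rather than to introduce any new technique.
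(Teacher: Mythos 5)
Your proof is correct and is exactly the argument the paper intends: the proposition is stated there without proof as an immediate consequence of Lemma \ref{diameter} together with the definition of self-complementarity, which is precisely your two-line derivation $\G_2=\G^c\cong\G$.
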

\begin{proposition}\label{inducedsubgraph}
Every graph is an induced subgraph of a self $2$-distance graph.
\end{proposition}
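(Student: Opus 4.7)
The plan is to reduce the statement to a self-complementarity question using the two results just stated. By Lemma \ref{diameter} and Proposition \ref{selfcomplementary}, every self-complementary graph of diameter $2$ is a self $2$-distance graph. So it is enough to show that an arbitrary graph $\G$ embeds as an induced subgraph in some self-complementary graph $H$ with $\diam(H)=2$.

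For the construction, I would take four disjoint copies $V^{(1)},V^{(2)},V^{(3)},V^{(4)}$ of $V(\G)$ (with $v^{(i)}\in V^{(i)}$ corresponding to $v\in V(\G)$), place copies of $\G$ on $V^{(1)}$ and $V^{(3)}$, and copies of $\G^c$ on $V^{(2)}$ and $V^{(4)}$. Between the blocks I would use the symmetric pattern: complete bipartite on $(V^{(1)},V^{(2)})$ and $(V^{(3)},V^{(4)})$, edgeless on $(V^{(2)},V^{(3)})$ and $(V^{(4)},V^{(1)})$, identity matching on $(V^{(1)},V^{(3)})$, and complement of the identity matching on $(V^{(2)},V^{(4)})$. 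The cyclic shift $\sigma\colon v^{(i)}\mapsto v^{(i+1\bmod 4)}$ is then designed to exchange intra-block adjacency with its complement (since $\G$ and $\G^c$ alternate between blocks) and to rotate each cross-block pattern into its complement, so $\sigma$ is an isomorphism $H\to H^c$. For the diameter, any pair of vertices either sits across a complete bipartite pair (distance $1$) or can be connected through a common neighbour in one of the "complete" blocks or via the complement-of-matching block, giving $\diam(H)\le 2$; for $\G$ non-complete (the only nontrivial case, the others being covered e.g. by odd cycles) the diameter is exactly $2$. The induced subgraph on $V^{(1)}$ is $\G$, so Lemma \ref{diameter} and Proposition \ref{selfcomplementary} together yield $H_2=H^c\cong H$, and $\G$ is an induced subgraph of the self $2$-distance graph $H$.

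The main obstacle is choosing the cross-block edges so that both constraints hold simultaneously: self-complementarity via $\sigma$ forces each of the six bipartite patterns to be the complement of the one $\sigma$ sends it to, which rules out the naive choices of complete or empty bipartite graphs on $(V^{(1)},V^{(3)})$ and $(V^{(2)},V^{(4)})$ (their rotated images under $\sigma^2$ coincide with themselves, not with their complements). The identity matching and its complement form a pair that is genuinely swapped by $\sigma$ while still providing enough edges to keep all cross-distances bounded by $2$. Verifying that the pattern is consistent reduces to a finite check over the six unordered block pairs and the four block diagonals, which is where the bulk of the routine bookkeeping lies.
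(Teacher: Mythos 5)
Your overall strategy --- embed $\G$ as an induced subgraph of a self-complementary graph $H$ of diameter two and then invoke Lemma \ref{diameter} and Proposition \ref{selfcomplementary} --- is exactly the idea underlying the paper's construction, and your verification that the cyclic shift $\sigma$ is an isomorphism $H\to H^c$ checks out. The gap is the diameter claim, and it is fatal rather than cosmetic. Take $u\neq v$ in $V(\G)$ with $uv\notin E(\G)$ and consider $u^{(1)}$ and $v^{(3)}$. The neighbours of $u^{(1)}$ are all of $V^{(2)}$, the vertex $u^{(3)}$, and the vertices $w^{(1)}$ with $uw\in E(\G)$; the neighbours of $v^{(3)}$ are all of $V^{(4)}$, the vertex $v^{(1)}$, and the vertices $w^{(3)}$ with $vw\in E(\G)$. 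Since $V^{(2)}$ and $V^{(4)}$ are disjoint, and $u^{(3)}$ (resp.\ $v^{(1)}$) is adjacent to $v^{(3)}$ (resp.\ $u^{(1)}$) only when $uv\in E(\G)$, these two vertices have no common neighbour, so $d_H(u^{(1)},v^{(3)})=3$. Hence $\diam(H)\geq3$ for every non-complete $\G$ --- precisely the case you call the nontrivial one --- and the complete case fails similarly via the pair $u^{(2)},u^{(4)}$. So Lemma \ref{diameter} does not apply, $H_2\neq H^c$, and the proof does not go through.

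The root of the problem is structural: your blocks form a blown-up $C_4$, and in $C_4$ the two diagonal block pairs have no common neighbour block, so distance-two paths between diagonal blocks must be routed through the sparse matching and co-matching, which cannot supply common neighbours. The paper avoids this by using five blocks arranged as a blown-up $C_5$ (two copies of $\G$, two copies of $\G^c$, and one extra block consisting of a single new vertex), with complete joins between consecutive blocks and no edges between non-consecutive ones: in $C_5$ every pair of non-adjacent blocks has a fully joined common neighbour block, so the diameter-two property comes for free, while the resulting graph is still isomorphic to its complement (the complement is again a blown-up $C_5$ with the same distribution of $\G$'s and $\G^c$'s). If you want to rescue a four-block construction you would need to replace the diagonal matchings by bipartite graphs dense enough to create common neighbours, but then the pair of diagonal patterns swapped by $\sigma$ can no longer be a matching and its complement, and your finite consistency check collapses; adding the fifth singleton block is the cleaner fix.
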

\begin{proof}
Let $\G$ be an arbitrary graph. Consider two disjoint copies $\G_1$ and $\G_2$ of $\G$ and two disjoint copies $\G_3$ and $\G_4$ of $\G^c$, and let $v$ be a new vertex. Then the graph with vertex set
\[V(\G_1)\cup V(\G_2)\cup V(\G_3)\cup V(\G_4)\cup\{v\}\]
and edge set
\[E(\G_1)\cup E(\G_2)\cup E(\G_3)\cup E(\G_4)\cup E,\]
where
\[E=\{\{v,v_1\},\{v,v_2\},\{v_1,v_3\},\{v_2,v_4\},\{v_3,v_4\}:v_i\in V(\G_i),i=1,2,3,4\}\]
is a self $2$-distance graph containing $\G$ as a subgraph (see Figure 2.3.1).
\end{proof}

\begin{center}
\begin{tikzpicture}
\node [draw,shape=circle] (A) at ({cos(90+72*0)},{sin(90+72*0)}) {$v$};
\node [draw,shape=circle] (B) at ({cos(90+72*1)},{sin(90+72*1)}) {$\G_1$};
\node [draw,shape=circle] (C) at ({cos(90+72*2)},{sin(90+72*2)}) {$\G_3$};
\node [draw,shape=circle] (D) at ({cos(90+72*3)},{sin(90+72*3)}) {$\G_4$};
\node [draw,shape=circle] (E) at ({cos(90+72*4)},{sin(90+72*4)}) {$\G_2$};
\draw (A)--(B)--(C)--(D)--(E)--(A);
\end{tikzpicture}\\
Figure 2.3.1
\end{center}
\begin{lemma}
If $\G$ is a self $2$-distance graph which is not an odd cycle, then $\gr(\G)=3$.
\end{lemma}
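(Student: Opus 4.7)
The plan is to prove the contrapositive: assume $\gr(\G) \geq 4$, so $\G$ is triangle-free, and deduce that $\G$ must be an odd cycle. Since isomorphism preserves girth, the hypothesis $\G \cong \G_2$ immediately yields that $\G_2$ is also triangle-free, and this dual restriction is what drives the proof.

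The key step, and the one I expect to be the main obstacle, is to establish $\Delta(\G) \leq 2$. Suppose otherwise, and let $v \in V(\G)$ have three distinct neighbors $u_1, u_2, u_3$. Triangle-freeness of $\G$ forbids any edge $u_i u_j$ (else $v u_i u_j$ would form a triangle in $\G$), yet each pair $u_i, u_j$ shares $v$ as a common neighbor and hence lies at distance exactly $2$ in $\G$. Therefore $\{u_1, u_2, u_3\}$ spans a triangle in $\G_2$, contradicting the triangle-freeness of $\G_2$. This is the crux; once it is in place the rest of the argument is structural.

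With $\Delta(\G) \leq 2$, the graph $\G$ is a disjoint union of isolated vertices, paths, and cycles, and a direct component-by-component calculation reads off the effect of $(\cdot)_2$: $(P_n)_2 \cong P_{\lceil n/2 \rceil} \sqcup P_{\lfloor n/2 \rfloor}$ splits into two components for $n \geq 2$, $(C_{2k})_2 \cong 2 C_k$ (with $(C_4)_2 \cong 2 K_2$), $(C_3)_2 \cong 3 K_1$, while only $(K_1)_2 \cong K_1$ and $(C_{2k+1})_2 \cong C_{2k+1}$ for $k \geq 2$ return a single component of the same type. Since $(\cdot)_2$ distributes over disjoint unions, the isomorphism $\G \cong \G_2$ forces the connected-component counts of $\G$ and $\G_2$ to match, which excludes every path, even-cycle, and $C_3$ component. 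Under the standing assumption that $\G$ is connected and nontrivial, the only remaining possibility is that $\G$ is an odd cycle of length $\geq 5$, contrary to the hypothesis. Hence $\gr(\G) = 3$.
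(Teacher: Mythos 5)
Your proof is correct and rests on the same central idea as the paper's: a vertex of degree at least three whose neighborhood is independent forces a triangle in $\G_2\cong\G$. You additionally spell out, via the component-by-component analysis, why $\Delta(\G)\leq 2$ would force $\G$ to be an odd cycle --- a step the paper's proof asserts without justification --- so your write-up is, if anything, more complete.
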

\begin{proof}
Since $\Delta(\G)>2$, we may choose a vertex $v$ of degree $\geq3$. If $N_\G(v)$ is not empty, then $\G$ has a triangle. Thus we may assume that $N_\G(v)$ is empty. But then $N_\G(v)^c$ is a subgraph of $\G_2\cong\G$, which implies that $\G$ has a triangle. Therefore $\gr(\G)=3$.
\end{proof}

The following lemma will be used in the next section.
\begin{lemma}\label{linegraphdistancegraph}
Let $\G$ be a graph. Then 
\[|E(L(\G))|=|E(\G_2)|+|E(\G)|+3\nabla(\G)-\binom{|V(\G)|}{2}+\sum_{u\not\sim v}|N_\G(u)\cap N_\G(v)|.\]
In particular, 
\[|E(L(\G))|=|E(\G_2)|+3\nabla(\G)\]
whenever $\G$ has no $4$-cycle subgraph.
\end{lemma}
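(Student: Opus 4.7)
The plan is to use the standard line-graph identity $|E(L(\G))|=\sum_{v\in V(\G)}\binom{\deg_\G(v)}{2}$ and re-interpret it by double counting. The quantity $\binom{\deg_\G(v)}{2}$ equals the number of unordered pairs $\{u,w\}$ of distinct neighbors of $v$; summing over all $v$, each unordered pair $\{u,w\}$ with $u\neq w$ is counted once per vertex simultaneously adjacent to both, namely $|N_\G(u)\cap N_\G(w)|$ times. Hence
\[|E(L(\G))|=\sum_{\{u,v\}:\,u\neq v}|N_\G(u)\cap N_\G(v)|.\]

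Next I would split the sum according to whether $u$ and $v$ are adjacent in $\G$. For $uv\in E(\G)$, the value $|N_\G(u)\cap N_\G(v)|$ is exactly the number of triangles containing the edge $uv$; since each triangle has three edges, it is counted three times in $\sum_{u\sim v}|N_\G(u)\cap N_\G(v)|$, yielding $3\nabla(\G)$. Combined with the non-adjacent contribution, this gives the core identity
\[|E(L(\G))|=3\nabla(\G)+\sum_{u\not\sim v}|N_\G(u)\cap N_\G(v)|.\]
From here the general formula is obtained by a simple rearrangement that keeps track of the difference between counting all non-adjacent distinct pairs (of which there are $\binom{|V(\G)|}{2}-|E(\G)|$) and counting only those at distance exactly $2$ (which form $E(\G_2)$), using that pairs at distance $\geq 3$ contribute $0$ to the sum.

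For the ``in particular'' clause, assume $\G$ contains no $C_4$. Then any two vertices of $\G$ share at most one common neighbor, so $|N_\G(u)\cap N_\G(v)|\in\{0,1\}$; for non-adjacent $u,v$ the value is $1$ if and only if they lie at distance $2$, i.e.\ $uv\in E(\G_2)$. Thus $\sum_{u\not\sim v}|N_\G(u)\cap N_\G(v)|=|E(\G_2)|$, and substitution into the displayed identity yields $|E(L(\G))|=|E(\G_2)|+3\nabla(\G)$. The main nuisance is the bookkeeping for the general identity, since the sum $\sum_{u\not\sim v}|N_\G(u)\cap N_\G(v)|$ must be rewritten so that $|E(\G_2)|$ appears cleanly together with the correction $|E(\G)|-\binom{|V(\G)|}{2}$ counting non-edges; the specialisation to $C_4$-free graphs is immediate once the single common-neighbor bound is invoked.
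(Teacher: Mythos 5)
Your double count is surely the intended argument here---the paper's own proof of this lemma is literally ``It is straightforward''---and the substantive part of your proposal is correct: the identity
\[
|E(L(\G))|=\sum_{v\in V(\G)}\binom{\deg_\G(v)}{2}=3\nabla(\G)+\sum_{u\not\sim v}|N_\G(u)\cap N_\G(v)|
\]
is right, and your derivation of the ``in particular'' clause from it (no $C_4$ forces $|N_\G(u)\cap N_\G(v)|\le 1$, and for non-adjacent pairs the value is $1$ exactly when $uv\in E(\G_2)$, so the sum equals $|E(\G_2)|$) is complete. That second display is the only part of the lemma the paper actually uses (in the proof of Theorem~\ref{C4free}, via $e_L-e=3\nabla(\G)$ together with $\G\cong\G_2$).

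The gap is the step you wave off as ``a simple rearrangement'': no rearrangement can turn your identity into the paper's first displayed formula, because the two are not equivalent. Their difference is
\[
|E(\G_2)|+|E(\G)|-\binom{|V(\G)|}{2},
\]
which equals minus the number of vertex pairs at distance at least $3$; it vanishes if and only if $\diam(\G)\le 2$. For the path $P_4$ the paper's formula predicts $|E(L(P_4))|=2+3+0-6+2=1$, whereas $|E(L(P_4))|=|E(P_3)|=2$, which your identity gives correctly. So the printed general formula is simply wrong as stated (or is missing the hypothesis $\diam(\G)\le 2$, under which $\binom{|V(\G)|}{2}-|E(\G)|=|E(\G_2)|$ and it collapses to yours). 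You should either record your identity as the correct general statement or add that hypothesis explicitly; and you should note that the ``in particular'' clause must be deduced from your identity directly, not by substituting $\sum_{u\not\sim v}|N_\G(u)\cap N_\G(v)|=|E(\G_2)|$ into the paper's first display, which would instead yield $|E(L(\G))|=2|E(\G_2)|+|E(\G)|+3\nabla(\G)-\binom{|V(\G)|}{2}$.
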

\begin{proof}
It is straightforward.
\end{proof}
\section{Graphs with no $4$-cycle subgraph}
Throughout this section, we assume that $\G\cong\G_2$ is a graph with no $4$-cycle as subgraph. Further, we assume that $\G$ is not an odd cycle. A simple observation shows that every triangle in $\G_2$ comes from an induced claw, an induces $6$-cycle or an induced edge product $C_5|C_3$. Moreover, every $6$-cycle in $\G$ is induced or it induces a graph isomorphic to $C_5|C_3$. To achieve the classification of graphs $\G$ with the mentioned properties, we need to analyze the existence of some special subgraphs of $\G$ as presented in Lemma \ref{C4-free:C5C3}--\ref{C4-free:C3C3}. The following lemma will be used in the sequel without reference.
\begin{lemma}\label{C4-free:Delta=3}
$\Delta(\G)=3$.
\end{lemma}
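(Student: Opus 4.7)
The plan is to prove $\Delta(\G)\le 3$ by contradiction; the lower bound $\Delta(\G)\ge 3$ is already implicit in the standing hypotheses (as used in the previous lemma, where $\Delta>2$ is taken for granted, since $\Delta\le 2$ would make $\G$ a disjoint union of paths and cycles and the relation $\G\cong\G_2$ combined with the exclusion of odd cycles rules this out). So suppose for contradiction that $d:=\deg_\G(v)=\Delta(\G)\ge 4$ for some $v$, write $N(v)=\{u_1,\dots,u_d\}$, and aim to produce a $4$-cycle inside $\G_2$ — which contradicts the $C_4$-freeness of $\G_2$ inherited from $\G$ via the isomorphism.

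The first observation is that the $C_4$-free hypothesis forces any two distinct vertices of $\G$ to share at most one common neighbor, since a second common neighbor would close a $4$-cycle. Applied to distinct $u_i,u_j\in N(v)$, this gives $N(u_i)\cap N(u_j)=\{v\}$; in particular no vertex of $N(v)$ has more than one neighbor inside $N(v)$, so $\G[N(v)]$ is a matching $M$ on $d$ vertices. The second observation is that for distinct $u_i,u_j\in N(v)$ the path $u_i$--$v$--$u_j$ gives $d_\G(u_i,u_j)\le 2$, with equality iff $u_iu_j\notin E(\G)$. Therefore
\[
\G_2[N(v)]=\overline{M},
\]
the complement of $M$ on the vertex set $N(v)$.

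The heart of the argument is then the elementary fact that $\overline{M}$ contains a $C_4$ whenever $d\ge 4$. If $|M|\ge 2$, two disjoint edges $\{a_1,a_2\}$ and $\{a_3,a_4\}$ of $M$ give the induced subgraph $K_{2,2}\cong C_4$ in $\overline M$ on these four vertices. If $|M|\le 1$, pick any four vertices of $N(v)$: among them at most one $M$-edge occurs, so $\overline M$ restricted to these four vertices is $K_4$ with at most one edge removed, which still contains a $4$-cycle. Either way $\G_2$ acquires a $4$-cycle, contradicting the fact that $C_4$-freeness is a graph property preserved by the isomorphism $\G\cong\G_2$. Hence $\Delta(\G)\le 3$, which combined with the standing lower bound yields $\Delta(\G)=3$.

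The only step requiring care is the final case split, which is a short finite verification; I do not foresee any deeper obstacle. The real content of the lemma is the identity $\G_2[N(v)]=\overline{\G[N(v)]}$ for an arbitrary vertex $v$, together with the immediate observation that when $\G[N(v)]$ is a matching (forced by $C_4$-freeness) and $|N(v)|\ge 4$, the complement already contains a forbidden $4$-cycle.
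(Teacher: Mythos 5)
Your proof is correct and follows essentially the same route as the paper: the paper's own (terser) argument observes that $N_\G(v)^c$ is a subgraph of $\G_2$ and that neither $\G$ nor $\G_2$ contains a $4$-cycle, which is exactly your identity $\G_2[N(v)]=\overline{\G[N(v)]}$ plus the observation that the complement of a matching on at least four vertices contains a $C_4$. You have merely filled in the details (that $C_4$-freeness forces $\G[N(v)]$ to be a matching, and the short case split on $|M|$) that the paper leaves implicit.
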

\begin{proof}
Since neither $\G$ nor $\G_2$ have $4$-cycles and $N_\G(v)^c$ is a subgraph of $\G_2$ for all $v\in V(\G)$, it follows that $\Delta(\G)\leq3$. Now, the fact that $\G$ is not a cycle, implies that $\Delta(\G)\geq3$ so that $\Delta(\G)=3$.
\end{proof}
\begin{lemma}\label{C4-free:C5C3}
If $\G$ has a $C_5|C_3$ subgraph, then $\G$ is isomorphic to $C_5|C_3$.
\end{lemma}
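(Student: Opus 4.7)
\emph{Plan.} Label the given $C_5|C_3$-subgraph by $v_1,v_2,v_3,v_4,v_5,w$, so that $v_1v_2v_3v_4v_5v_1$ is the $5$-cycle, $v_1v_2w$ is the triangle, and $v_1v_2$ is the shared edge. A quick case check over each of the eight non-subgraph pairs in $\{v_1,\dots,v_5,w\}$ shows that every additional edge would create a $4$-cycle, so this copy is induced. Since $\deg_\G(v_1)=\deg_\G(v_2)=3$ already, Lemma~\ref{C4-free:Delta=3} prevents $v_1$ and $v_2$ from gaining any further neighbour; thus any extra vertex of $\G$ is joined in via $v_3,v_4,v_5$ or $w$, and a second $C_4$-check shows such an extra neighbour has to lie outside $\{v_1,\dots,v_5,w\}$.

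Next I would exploit $\Delta(\G_2)=3$, which translates to the statement that every vertex has at most three vertices at $\G$-distance exactly $2$. Computing the distance-$2$ sets of $v_1$, $v_2$ and $v_4$ in the induced subgraph, and using $C_4$-freeness (two non-adjacent vertices share at most one common neighbour), one sees that for each pair among $\{v_5,w\}$, $\{v_3,w\}$, $\{v_3,v_5\}$ at most one member can be extended: two extensions would give four distance-$2$ neighbours of $v_1$, $v_2$ or $v_4$, while a shared new vertex would produce a $4$-cycle. Hence at most one of $v_3,v_5,w$ is extended; a further distance-$2$ count from $v_3$ and $v_5$ then forbids $v_4$ from being extended in every remaining case. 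This already settles the situation in which none of the four is extended: $\G$ consists of exactly the six labelled vertices, so $\G\cong C_5|C_3$.

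What is left is the case where exactly one of $v_3,v_5,w$ has a new neighbour. In each of the $v_3$ and $v_5$ subcases the same distance-$2$ argument applied at the extended vertex itself forces its new neighbour to be a pendant leaf, and the counting identity $3n_3(\G)-n_1(\G)=6\nabla(\G)$ derived from Lemma~\ref{linegraphdistancegraph} (with $\nabla(\G)=1$, since the only triangle is $\{v_1,v_2,w\}$) gives $8\neq 6$, a contradiction. In the subcase where $w$ gains a new vertex $x$, the distance-$2$ bounds at $w$, then $x$, then $x$'s second neighbour $y$, iteratively force $\deg_\G(x)\le 2$ and $\deg_\G(y)\le 2$, so any extras form a tightly restricted pendant structure hanging off $w$. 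The main obstacle will be eliminating this pendant structure; I expect to combine the counting identity above with a direct degree-sequence comparison, using the fact that $v_3$ and $v_5$ always have $\G_2$-degree $3$ (their distance-$2$ sets are $\{v_1,w,v_5\}$ and $\{v_2,w,v_3\}$) while their $\G$-degree is $2$, producing a surplus of degree-$3$ vertices in $\G_2$ that the permitted extras cannot match, and then closing by induction on the depth of the pendant tree, bounding each new degree via $\Delta(\G_2)\le 3$ and ruling out the resulting finite list of configurations.
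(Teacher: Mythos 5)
Your setup and most of your argument are sound, and they run parallel to the paper's own proof (both are case analyses over which vertex of the $C_5|C_3$ can receive a new neighbour, driven by $\Delta(\G)=\Delta(\G_2)=3$ together with $4$-cycle-freeness of both $\G$ and $\G_2$): the copy is induced, $v_1,v_2$ are saturated, at most one of $v_3,v_5,w$ can be extended, $v_4$ cannot be, and in the $v_3$/$v_5$ subcases the new neighbour is indeed a forced pendant that your identity $3n_3-n_1=6\nabla$ (correctly extracted from Lemma~\ref{linegraphdistancegraph} and $|E(\G_2)|=|E(\G)|$) eliminates. The genuine gap is the final case, where $w$ acquires a new neighbour $x$: there you stop proving and start forecasting (``I expect to combine\dots'', ``closing by induction on the depth of the pendant tree\dots ruling out the resulting finite list of configurations''). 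No configuration is actually ruled out, the induction is never set up, and the claimed degree surplus is never computed, so as written the lemma is not established.

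The irony is that this case needs none of that machinery and is in fact the easiest of the three. If $x\sim w$, then $x$ is not adjacent to $v_1$ or $v_2$ (both are already of degree $3$), so $x$ is at distance exactly $2$ from each of them via $w$; likewise $v_1,v_4$ and $v_2,v_4$ are nonadjacent with common neighbours $v_5$ and $v_3$, hence at distance $2$. Therefore $x,v_1,v_4,v_2,x$ is a $4$-cycle in $\G_2$, contradicting the hypothesis immediately --- and this works whether or not $x$ is also adjacent to $v_3$, $v_4$ or $v_5$. This is precisely how the paper dispatches its ``forehead'' case. Replacing your sketch by this one observation closes the proof.
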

\begin{proof}
Suppose on the contrary that $\G\ncong C_5|C_3$ and $S\subset V(\G)$ induces a subgraph of $\G$ isomorphic to $C_5|C_3$ (see Figure 3.1.1). Then there exists a vertex $v\in V(\G)$ adjacent to some vertex of $S$. Clearly, $v$ is not adjacent to the temples for $\Delta(\G)=3$.

First suppose that $v$ is adjacent to the forehead. If $v$ is adjacent to any of the jaws, then we get a $4$-cycle, which is a contradiction. Thus $N_S(v)=\{a\}$ or $\{a,d\}$, which imply that $\{v,b,d,f\}$ is a $4$-cycle in $\G_2$, which is again a contradiction. Therefore $v$ is not adjacent to the forehead. Next assume that $v$ is adjacent to the chin. Clearly, $v$ is not adjacent to both $c$ and $d$, say $c$, for otherwise we have a $r$-cycle $\{c,d,e,v\}$. Bu then $\{a,f,e,v\}\subseteq N_{\G_2}(c)$, that is, $\Delta(\G_2)>3$, which is a contradiction. Finally, assume that $v$ is adjacent to any of the jaws. Then $v$ is adjacent to exactly one of the jaws, say $c$, for otherwise $\{v,c,d,e\}$ is a $4$-cycle. Since $(S\cup\{v\})_2\ncong S\cup\{v\}$, there exists yet another vertex $u\in V(\G)\setminus S\cup\{v\}$ adjacent to some vertex of $S\cup\{v\}$. If $u$ is adjacent to $v$, then either $N_{\G_2}(c)$ contains $\{a,e,f,u\}$ as $u$ is cannot be adjacent to $c$, which is a contradictions. Thus $u$ is not adjacent to $v$ and by the same arguments as before $u$ is adjacent to one of the jaws. Since $u$ and $c$ are not adjacent, $u$ and $e$ must be adjacent, which implies that $\{b,f,u,v\}\subseteq N_{\G_2}(d)$, a contradiction. The proof is complete.
\end{proof}
\begin{center}
\begin{tikzpicture}[scale=1]
\node [circle,draw,inner sep=1pt,label=above:{forehead}] (A) at (90:1cm) {a};
\node [circle,draw,inner sep=1pt,label=left:{left temple}] (B) at (150:1cm) {b};
\node [circle,draw,inner sep=1pt,label=left:{left jaw}] (C) at (210:1cm) {c};
\node [circle,draw,inner sep=1pt,label=below:{chin}] (D) at (270:1cm) {d};
\node [circle,draw,inner sep=1pt,label=right:{right jaw}] (E) at (330:1cm) {e};
\node [circle,draw,inner sep=1pt,label=right:{right temple}] (F) at (30:1cm) {f};
\draw (A)--(B)--(C)--(D)--(E)--(F)--(A); 
\draw (B)--(F); 
\end{tikzpicture}\\
Figure 3.1.1
\end{center}
\begin{lemma}\label{C4-free:C5}
 If $\G$ has a $5$-cycle, then $\G$ is isomorphic to $C_5|C_3$.
\end{lemma}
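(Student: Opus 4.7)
The plan is to exhibit $C_5|C_3$ as a subgraph of $\G$ and then invoke Lemma \ref{C4-free:C5C3}. Let $C=v_1v_2v_3v_4v_5$ be the given $5$-cycle. First I would observe that $C$ must be induced, since any chord would split $C$ into a triangle and a $4$-cycle, and the latter is forbidden by hypothesis.

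Because $\G$ is not an odd cycle and $\Delta(\G)=3$, some vertex of $C$ must carry a third neighbor outside $C$; otherwise each $v_i$ would have degree exactly $2$, so $C$ would be all of $\G$ and $\G$ would be the odd cycle $C_5$. Say $v_1$ has a neighbor $u\notin V(C)$. A short case analysis on the adjacencies of $u$ to the remaining vertices of $C$ follows. If $u\sim v_2$ or $u\sim v_5$, then the triangle on $\{u,v_1,v_2\}$ or $\{u,v_1,v_5\}$ shares an edge with $C$, producing a $C_5|C_3$ subgraph of $\G$ and finishing the proof. The cases $u\sim v_3$ and $u\sim v_4$ are ruled out because each would produce a forbidden $4$-cycle (for example, $uv_1v_2v_3$).

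In the remaining case $u$'s only neighbor in $C$ is $v_1$, and I would shift focus to $\G_2$. Since $C$ is induced, every non-adjacent pair of $C$ realises distance exactly $2$ in $\G$, so the pentagram $v_1v_3v_5v_2v_4v_1$ (with edges $v_iv_{i+2}$, indices mod~$5$) sits inside $\G_2$ as a $5$-cycle. The two length-$2$ paths $u-v_1-v_2$ and $u-v_1-v_5$ show that $u$ is at distance exactly $2$ from both $v_2$ and $v_5$ in $\G$ (noting $u\not\sim v_2, v_5$ by the previous paragraph), so $uv_2,uv_5\in E(\G_2)$. Since $v_2v_5$ is itself consecutive on the pentagram, the triangle $uv_2v_5$ shares this very edge with the pentagram $5$-cycle, giving a $C_5|C_3$ subgraph of $\G_2$. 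Transporting via $\G\cong\G_2$ yields a $C_5|C_3$ subgraph of $\G$, and Lemma \ref{C4-free:C5C3} concludes $\G\cong C_5|C_3$.

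I expect the one slightly delicate point to be keeping the two $5$-cycles straight — the original $C$ living in $\G$ and the pentagram living in $\G_2$ — and verifying that the edge that $u$'s triangle shares really is a pentagram edge rather than a $C$-edge. Once the pentagram adjacencies are written out modulo $5$, the check is immediate.
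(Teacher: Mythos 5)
Your proposal is correct and follows essentially the same route as the paper: the paper also notes the $5$-cycle is induced, picks an outside neighbour, rules out the two ``far'' adjacencies via a forbidden $4$-cycle, and then asserts (without the details you supply) that either $\G$ or $\G_2$ contains a $C_5|C_3$ subgraph, concluding by Lemma~\ref{C4-free:C5C3}. Your explicit verification of the pendant case --- that the pentagram in $\G_2$ together with the triangle $uv_2v_5$ on the pentagram edge $v_2v_5$ forms the required $C_5|C_3$ --- is precisely the ``easy to see'' step the paper leaves to the reader.
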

\begin{proof}
Since $\G\ncong C_5$, then there exist a vertex $v\in V(\G)\setminus S$ adjacent to some vertex $u$ of $S$, where $S$ is a $5$-cycle in $\G$. Clearly, $S$ is an induced subgraph of $\G$. Let $a,b$ be two vertices adjacent to $u$ in $S$ and $c,d$ be two other vertices. Since $\G$ has no $4$-cycle it follows that $v$ is not adjacent to $c,d$. Now, it is easy to see that either $\G$ or $\G_2$ has a subgraph isomorphic to $C_5|C_3$, from which by Lemma \ref{C4-free:C5C3}, it follows that $\G\cong C_5|C_3$.
\end{proof}
\begin{lemma}\label{C4-free:C6}
If $\G$ has a $6$-cycle, then $\G$ is isomorphic to $C_5|C_3$.
\end{lemma}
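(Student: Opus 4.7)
The plan is as follows. By the observation preceding Lemma~\ref{C4-free:C5C3}, every $6$-cycle of $\G$ is either induced or induces a copy of $C_5|C_3$; in the latter case Lemma~\ref{C4-free:C5C3} immediately yields $\G\cong C_5|C_3$, so I may assume $S=v_0v_1v_2v_3v_4v_5$ is an induced $6$-cycle and aim either to produce a $5$-cycle (so Lemma~\ref{C4-free:C5} concludes) or to reach a direct contradiction.

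First I would examine the antipodal pairs $\{v_i,v_{i+3}\}$. Because $S$ is induced, each has distance at least two in $\G$. If for some $i$ the distance equals two, then a common neighbor $w$ of $v_i$ and $v_{i+3}$ must lie outside $S$ (an easy check shows no vertex of $S$ is adjacent to both), and $v_iv_{i+1}v_{i+2}v_{i+3}wv_i$ is a $5$-cycle; Lemma~\ref{C4-free:C5} then gives $\G\cong C_5|C_3$. Henceforth I assume $d_\G(v_i,v_{i+3})\ge 3$ for every $i$. Combined with $\Delta(\G)=3$ and the $4$-cycle-free hypothesis, this sharply restricts any external neighbor $w$ of a vertex $v_i$: $w$ cannot be adjacent to $v_{i\pm 2}$ (else a $4$-cycle), not to $v_{i+3}$ (else the antipodal bound fails), and not to both $v_{i-1}$ and $v_{i+1}$ simultaneously (else the $4$-cycle $v_{i-1}v_iv_{i+1}w$).

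Now I split on whether any $v_i$ has an external neighbor. If none does, each $v_i$ has degree two in $\G$, so $S$ is a connected component of $\G$ isomorphic to $C_6$. Direct computation gives $(C_6)_2\cong 2K_3$, $(K_3)_2\cong 3K_1$, and $(K_1)_2\cong K_1$. Since $\G\cong \G_2$ preserves the multiset of component isomorphism types, comparing $K_1$-multiplicities (noting that a $K_3$-component of $\G$ contributes three isolated vertices to $\G_2$, and that all such contributions from other components of $\G$ are nonnegative) forces $\G$ to have no $K_3$-component; then comparing $K_3$-multiplicities (noting that a $C_6$-component of $\G$ contributes two $K_3$-components to $\G_2$) forces $\G$ to have no $C_6$-component, contradicting the existence of~$S$. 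In the remaining subcase some $v_i$, say $v_0$, has an external neighbor $w_0$. For each existing external neighbor $w_j$, provided $w_j$ is not adjacent to $v_{j\pm 1}$ in $\G$, $w_j$ becomes a distance-$2$ neighbor of both $v_{j-1}$ and $v_{j+1}$; since each $v_i$ already has $v_{i-2}$ and $v_{i+2}$ as distance-$2$ neighbors inside $S$, the constraint $\Delta(\G_2)=3$ drastically limits which $w_j$'s can coexist and to which of $v_{j\pm 1}$ each is joined. For each remaining configuration one should either locate an explicit $5$-cycle — finishing via Lemma~\ref{C4-free:C5} — or exhibit a triangle of $\G_2$ that fails to arise from any induced claw, induced $6$-cycle, or induced $C_5|C_3$ of $\G$, contradicting the observation at the start of the section.

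The hard part will be the combinatorial enumeration in this last subcase: one must simultaneously track which $w_j$'s exist, which among them coincide (neighboring $v_j$'s may share an external neighbor and together form a triangle), and to which of $v_{j-1},v_{j+1}$ each $w_j$ is joined, all subject to both the $\G$- and $\G_2$-constraints. I would exploit the rotational symmetry of $S$ and the even/odd parity split of its vertices to reduce the analysis to a short list of essentially distinct configurations, and then rule each one out by hand.
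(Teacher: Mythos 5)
Your preliminary reductions are sound and in places more careful than the paper's (the component-counting argument for an isolated $C_6$, and the nice observation that an antipodal shortcut $d_\G(v_i,v_{i+3})=2$ yields a $5$-cycle and hence finishes via Lemma~\ref{C4-free:C5}, which is legitimately available at this point). But the proposal stops exactly where the proof begins: the ``combinatorial enumeration in this last subcase'' that you defer is the entire content of the lemma, and you have not carried it out. Worse, the contradiction tools you plan to use there are not adequate. First, your degree bound $\Delta(\G_2)=3$ does not eliminate the case where an external neighbor $w$ of $v_i$ is adjacent to neither $v_{i-1}$ nor $v_{i+1}$: in that case $v_{i-1}$ acquires exactly three distance-two neighbors ($v_{i+1}$, $v_{i-3}$, $w$), so no degree violation occurs. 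What kills this case is that $\{v_{i-1},v_{i+1},v_{i+3},w\}$ spans a $4$-cycle in $\G_2$; you need the $4$-cycle-freeness of $\G_2$, not just its maximum degree, to force every external neighbor to form a triangle with an edge of the hexagon.

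Second, your proposed dichotomy (``locate a $5$-cycle in $\G$, or exhibit a triangle of $\G_2$ not arising from a claw, induced hexagon, or $C_5|C_3$'') fails on a configuration that genuinely survives all your stated constraints: the induced hexagon with a single triangle glued onto one edge via an apex $u$, where $u$ has one further neighbor $v$ outside. This graph has no $5$-cycle, and the only triangles of $\G_2$ on these vertices are the two coming from the induced hexagon, so neither of your contradictions materializes. The paper disposes of this configuration (its Case~3) by computing the induced subgraph of $\G_2$ on $S\cup\{u,v\}$ and then observing that $u$ has degree at least four in $(\G_2)_2$, i.e.\ by iterating the construction once more --- a step absent from your plan. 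Until you (i) establish via the $\G_2$-square argument that external neighbors come in apex-of-a-triangle form, (ii) enumerate how many such triangles can be attached (the paper's three cases: three, two, or one), and (iii) supply a contradiction for the one-triangle case that actually works, the proof is incomplete.
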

\begin{proof}
If $\G$ has a $C_5|C_3$ subgraph, then we are done. Thus we may assume that $\G$ has no subgraph isomorphic to $C_5|C_3$. Let $S\subset V(\G)$ denote a $6$-cycle $a,b,c,d,e,f,a$ in $\G$. Clearly, $S$ is an induced subgraph of $\G$. Since $S_2\ncong S$, we have a vertex $u\in V(\G)\setminus S$ adjacent to some vertex $a$ of $S$. Clearly, $u$ is adjacent to exactly one of $b,f$, say $b$, for otherwise either $\{b,d,f,u\}$ is a $4$-cycle in $\G_2$, or $\{b,a,f,u\}$ is a $4$-cycle in $\G$, which are both impossible. Again, the fact that $\G$ has no $4$-cycle implies that $u$ is not adjacent to $c,d,e,f$. Moreover, $u$ is the unique vertex adjacent to both $a,b$. Now, we have three cases:

Case 1. If $\G$ has a subgraph $T$ as drawn in Figure 3.3.3, then $T$ is an induced subgraph and a simple verification shows that $T$ is a connected component of $\G$, which implies that $\G=T$. But then $\G_2\ncong\G$, which is a contradiction.

Case 2. If $\G$ has a subgraph $T$ as drawn in Figure 3.3.2, then since $T_2\ncong T$, $\G$ has a vertex $w'$ adjacent to some vertex of $T$. If $w'$ is adjacent to any of the vertices $a',b',c',d',u',v'$, then we get a vertex of degree $\geq4$ in $\G$ or $\G_2$, which is impossible. Thus $w'$ is adjacent to $e'$ or $f'$ and by the previous argument it follows that $w'$ is adjacent to both $e'$ and $f'$, which is impossible by Case 1.

Case 3. $\G$ has no subgraphs isomorphic to that of Figure 3.3.2. Then $u$ is the only vertex of $\G$ adjacent to $S$ (see Figure 3.3.1). Since $(S\cup\{u\})_2\ncong S\cup\{u\}$, there exists a vertex $v\in V(\G)\setminus S\cup\{u\}$ adjacent to $u$. But then $(S\cup\{u,v\})_2$ is an induced subgraph of $\G_2\cong\G$ isomorphic to the graph in Figure 3.3.4, from which it follows that $\deg_{(\G_2)_2}(u)\geq4$, a contradiction.
\end{proof}
\begin{center}
\begin{tabular}{ccccc}
\begin{tikzpicture}[scale=1]
\node at (0,-2.07) {};
\node [circle,fill=black,inner sep=1pt] (A) at (0:1cm) {};
\node [circle,fill=black,inner sep=1pt] (B) at (60:1cm) {};
\node [circle,fill=black,inner sep=1pt] (C) at (120:1cm) {};
\node [circle,fill=black,inner sep=1pt] (D) at (180:1cm) {};
\node [circle,fill=black,inner sep=1pt] (E) at (240:1cm) {};
\node [circle,fill=black,inner sep=1pt] (F) at (300:1cm) {};
\node [circle,fill=black,inner sep=1pt] (G) at (30:1.732cm) {};
\draw (A)--(B)--(C)--(D)--(E)--(F)--(A);
\draw (A)--(G)--(B);
\node at (0:1.25cm) {$a$};
\node at (60:1.25cm) {$b$};
\node at (120:1.25cm) {$c$};
\node at (180:1.25cm) {$d$};
\node at (240:1.25cm) {$e$};
\node at (300:1.25cm) {$f$};
\node at (30:1.982cm) {$u$};
\end{tikzpicture}
&&
\begin{tikzpicture}[scale=1]
\node at (0,-2.07) {};
\node [circle,fill=black,inner sep=1pt] (A) at (0:1cm) {};
\node [circle,fill=black,inner sep=1pt] (B) at (60:1cm) {};
\node [circle,fill=black,inner sep=1pt] (C) at (120:1cm) {};
\node [circle,fill=black,inner sep=1pt] (D) at (180:1cm) {};
\node [circle,fill=black,inner sep=1pt] (E) at (240:1cm) {};
\node [circle,fill=black,inner sep=1pt] (F) at (300:1cm) {};
\node [circle,fill=black,inner sep=1pt] (G) at (30:1.732cm) {};
\node [circle,fill=black,inner sep=1pt] (H) at (150:1.732cm) {};
\draw (A)--(B)--(C)--(D)--(E)--(F)--(A);
\draw (A)--(G)--(B);
\draw (C)--(H)--(D);
\node at (0:1.25cm) {$a'$};
\node at (60:1.25cm) {$b'$};
\node at (120:1.25cm) {$c'$};
\node at (180:1.25cm) {$d'$};
\node at (240:1.25cm) {$e'$};
\node at (300:1.25cm) {$f'$};
\node at (30:1.982cm) {$u'$};
\node at (150:1.982cm) {$v'$};
\end{tikzpicture}
&&
\begin{tikzpicture}[scale=1]
\node [circle,fill=black,inner sep=1pt] (A) at (0:1cm) {};
\node [circle,fill=black,inner sep=1pt] (B) at (60:1cm) {};
\node [circle,fill=black,inner sep=1pt] (C) at (120:1cm) {};
\node [circle,fill=black,inner sep=1pt] (D) at (180:1cm) {};
\node [circle,fill=black,inner sep=1pt] (E) at (240:1cm) {};
\node [circle,fill=black,inner sep=1pt] (F) at (300:1cm) {};
\node [circle,fill=black,inner sep=1pt] (G) at (30:1.732cm) {};
\node [circle,fill=black,inner sep=1pt] (H) at (150:1.732cm) {};
\node [circle,fill=black,inner sep=1pt] (I) at (270:1.732cm) {};	
\draw (A)--(B)--(C)--(D)--(E)--(F)--(A);
\draw (A)--(G)--(B);
\draw (C)--(H)--(D);
\draw (E)--(I)--(F);
\node at (0:1.25cm) {$a'$};
\node at (60:1.25cm) {$b'$};
\node at (120:1.25cm) {$c'$};
\node at (180:1.25cm) {$d'$};
\node at (240:1.25cm) {$e'$};
\node at (300:1.25cm) {$f'$};
\node at (30:1.982cm) {$u'$};
\node at (150:1.982cm) {$v'$};
\node at (270:1.982cm) {$w'$};
\end{tikzpicture}\\
\mbox{Figure 3.3.1}&&\mbox{Figure 3.3.2}&&\mbox{Figure 3.3.3}
\end{tabular}

\begin{tabular}{ccccc}
&&
\begin{tikzpicture}[scale=1]
\node [circle,fill=black,inner sep=1pt] (A) at (0:1cm) {};
\node [circle,fill=black,inner sep=1pt] (B) at (60:1cm) {};
\node [circle,fill=black,inner sep=1pt] (C) at (120:1cm) {};
\node [circle,fill=black,inner sep=1pt] (D) at (180:1cm) {};
\node [circle,fill=black,inner sep=1pt] (E) at (240:1cm) {};
\node [circle,fill=black,inner sep=1pt] (F) at (300:1cm) {};
\node [circle,fill=black,inner sep=1pt] (G) at (30:1.732cm) {};
\node [circle,fill=black,inner sep=1pt] (H) at (210:1.732cm) {};
\draw (A)--(B)--(C)--(D)--(E)--(F)--(A);
\draw (A)--(G)--(B);
\draw (D)--(H)--(E);
\node at (0:1.25cm) {$a$};
\node at (60:1.25cm) {$c$};
\node at (120:1.25cm) {$u$};
\node at (180:1.25cm) {$f$};
\node at (240:1.25cm) {$b$};
\node at (300:1.25cm) {$v$};
\node at (30:1.982cm) {$e$};
\node at (210:1.982cm) {$d$};
\end{tikzpicture}
&&
\\
&&\mbox{Figure 3.3.4}&&
\end{tabular}
\end{center}
\begin{lemma}\label{C4-free:Cn}
If $\G$ is not isomorphic to $C_5|C_3$, then $\G$ has no cycles of length exceeding three.
\end{lemma}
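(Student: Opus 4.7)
The plan is to assume for contradiction that $\G$ contains a cycle of length $\geq 4$. Since $\G$ has no $C_4$ by hypothesis and $\G \not\cong C_5|C_3$, Lemmas \ref{C4-free:C5} and \ref{C4-free:C6} rule out any cycle of length $5$ or $6$; hence every cycle of length $\geq 4$ has length at least $7$. Let $\ell \geq 7$ be the minimum such length, and fix a cycle $C = v_1 v_2 \cdots v_\ell v_1$ of length $\ell$.

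Two preliminary reductions then suggest themselves. First, $C$ is induced: a chord $v_i v_{i+k}$ with $2 \leq k \leq \ell/2$ would split $C$ into two cycles of lengths $k+1$ and $\ell - k + 1$, each of which must lie in $\{3\}\cup[7,\ell-1]$ (avoiding $\{4,5,6\}$), and a short check shows no admissible pair sums to $\ell+2$ for $\ell \geq 7$. Second, $\ell$ must be odd: if $\ell$ were even, the distance-$2$ edges $v_i v_{i+2}$ (guaranteed by inducedness of $C$) would assemble in $\G_2 \cong \G$ into two disjoint $(\ell/2)$-cycles, which is forbidden when $\ell/2 \in \{4,5,6\}$ and contradicts minimality when $\ell/2 \geq 7$.

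Since $\ell$ is odd and $\G$ is not an odd cycle, there exists $u \in V(\G)\setminus V(C)$ adjacent to some $v_i$; relabel so that $u \sim v_1$. If $u \sim v_j$ for some $3 \leq j \leq \ell-1$, then one of the two cycles through $u v_1$ and $u v_j$ closed along $C$ has length either in $\{4,5,6\}$ (forbidden) or in $[7,\ell-1]$ (contradicting minimality). Moreover, $u$ cannot be adjacent to both $v_2$ and $v_\ell$, else $u v_2 v_1 v_\ell u$ is a $C_4$. Hence $N_C(u) \in \{\{v_1\},\{v_1,v_2\},\{v_1,v_\ell\}\}$.

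The main obstacle is the final step, where each of these three adjacency patterns must be driven to a contradiction. My principal leverage is the classification implicit in the preamble of this section: in a $C_4$-free graph, every triangle of $\G_2$ arises from an induced $K_{1,3}$, an induced $C_6$, or an induced $C_5|C_3$ of $\G$. Since $\gr(\G)=3$, $\G$ has triangles that must be mirrored by triangles in $\G_2$; as induced $C_6$ and $C_5|C_3$ subgraphs are excluded, every such mirror triangle must come from an induced claw of $\G$, i.e.\ a degree-$3$ vertex of $\G$ whose three neighbors form an independent set. I would then inventory where such claws can sit on or near $C$ (governed by which $v_i$ carry off-cycle neighbors and how those neighbors attach), compute the $\G_2$-neighborhoods of $v_1,v_2,v_\ell,u$ and neighboring cycle vertices, and combine these with the already present $\ell$-cycle $v_1 v_3 v_5 \cdots v_\ell v_2 v_4 \cdots v_{\ell-1} v_1$ sitting inside $\G_2$ on $V(C)$. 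The goal is to show that any consistent placement of off-cycle neighbors either pushes some vertex of $\G_2$ to degree $\geq 4$ or creates a forbidden cycle of length $4$, $5$, or $6$ in $\G_2 \cong \G$. This bookkeeping is the bulk of the proof and runs parallel in style to the case analysis carried out in Lemma \ref{C4-free:C6}.
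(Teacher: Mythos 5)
Your setup coincides with the paper's: reduce, via Lemmas \ref{C4-free:C5} and \ref{C4-free:C6} and a minimality (equivalently, induction) argument, to an induced cycle $C$ of odd length $\ell\geq7$, and show that any outside vertex $u$ attaches to $C$ either at a single vertex $v_1$ or at two consecutive vertices $v_1,v_2$. But the decisive step --- actually deriving a contradiction from these two attachment patterns --- is precisely what you do not carry out: your last paragraph is a plan (``I would then inventory\ldots'', ``the goal is to show\ldots'') rather than an argument, and there is no evidence that the proposed claw/triangle bookkeeping closes. Since everything up to that point is routine, the entire content of the lemma lives in the step you have omitted; this is a genuine gap.

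The paper finishes with a short, concrete mechanism that you are missing. Because $\ell$ is odd, $C_2$ is again an $\ell$-cycle on $V(C)$ inside $\G_2$. If $N_C(u)=\{v_1\}$, then in $\G_2$ the vertex $u$ is adjacent to $v_2$ and $v_\ell$, which are adjacent in $C_2$; so $\G_2\cong\G$ contains an $\ell$-cycle with a vertex joined to two consecutive vertices of it, and this case reduces to the other one. If $u\sim v_1,v_2$ (and to nothing else on $C$), consider the outer neighbours $v_\ell$ of $v_1$ and $v_3$ of $v_2$: in $\G_2$ the path $v_\ell,u,v_3$ has length two, while $v_\ell$ and $v_3$ are joined inside $C_2$ by a path of length $(\ell-3)/2$ that avoids $u$. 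Together these form a cycle of length $(\ell+1)/2$ in $\G_2$, and $4\leq(\ell+1)/2\leq\ell-1$ for all $\ell\geq7$, contradicting the minimality of $\ell$. Without this (or an equally explicit) closing argument, the proof is incomplete.
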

\begin{proof}
By Lemmas \ref{C4-free:C5} and \ref{C4-free:C6} and hypothesis, $\G$ has no cycles of lengths $4,5$ or $6$. We proceed by induction to show that $\G$ has no cycles of lengths $\geq4$. Suppose $\G$ has no cycles of lengths $4,5,\ldots,n$ for some $n\geq6$. If $\G$ has an $(n+1)$-cycle $C$, then $C$ is an induced subgraph of $\G$. If $n+1$ is even, then clearly $\G_2$ has two $(n+1)/2$-cycles, which is a contradiction. Thus $n+1$ is odd. Since $\G$ is not an odd cycle, there exists a vertex $v\in V(\G)$ adjacent to some vertex $a\in V(C)$. Let $N_C(a)=\{b,c\}$. If $v$ is adjacent to some vertex in $C\setminus\{a,b,c\}$, then we obtain a cycle of length $l$ ($4\leq l\leq n$), which is a contradiction. If $v$ is not adjacent to $b,c$, then $\G\cong\G_2$ has a subgraph isomorphic to $(C\cup\{v\})_2$ that is an $|C|$-cycle with two adjacent vertices having a common neighbor. Hence, we may assume that $v$ is adjacent $b$ or $c$, say $b$. Since $\G$ has no $4$-cycle, $v$ is not adjacent to $c$. Let $N_C(b)=\{a,d\}$. Then $c,v,d$ is a path of length two in $\G_2$.
On the other hand, since $C_2$ is a subgraph of $\G_2$, there is a path of length at most $n/2$ from $c$ to $d$ disjoint from $c,v,d$. Hence $\G_2$ has a cycle of length $l$ such that $4\leq l\leq n/2+2\leq n$, which is a contradiction. The proof is complete.
\end{proof}
\begin{lemma}\label{C4-free:C3C3}
Triangles in $\G$ have disjoint vertices.
\end{lemma}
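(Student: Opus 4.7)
The plan is to argue by contradiction: suppose two distinct triangles $T_1$ and $T_2$ in $\G$ share at least one vertex, and split into cases according to whether $|V(T_1)\cap V(T_2)|$ equals $2$ or $1$ (equality of $3$ would make them the same triangle).

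In the first case, write $T_1=\{u,v,x\}$ and $T_2=\{u,v,y\}$ with $x\neq y$. Then the edges $ux$, $xv$, $vy$, $yu$ all lie in $\G$, so $u,x,v,y$ form a $4$-cycle, contradicting the standing assumption that $\G$ has no $4$-cycle subgraph. In the second case, write $T_1=\{v,a,b\}$ and $T_2=\{v,c,d\}$ with $a,b,c,d$ pairwise distinct. Then $v$ is adjacent to each of $a,b,c,d$, so $\deg_\G(v)\geq 4$, which contradicts $\Delta(\G)=3$ from Lemma \ref{C4-free:Delta=3}.

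Both cases rest only on the standing hypothesis (no $C_4$) together with the degree bound already established, so I do not anticipate any real obstacle; in particular, I do not expect to need to invoke the self $2$-distance property or the absence of longer cycles (Lemma \ref{C4-free:Cn}) in the argument. If anything, the only thing worth checking carefully is the enumeration of shared-vertex patterns to confirm that these two cases are exhaustive.
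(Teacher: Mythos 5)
Your proof is correct and takes essentially the same route as the paper, whose entire argument is the one-line observation that two triangles sharing a vertex force a $4$-cycle in $\G$ or in $\G_2$: your shared-edge case is identical (a $4$-cycle in $\G$), and your single-shared-vertex case merely substitutes the equivalent appeal to $\Delta(\G)=3$ from Lemma \ref{C4-free:Delta=3}, which is itself derived from the no-$C_4$ hypothesis and is explicitly licensed for use without reference. The case enumeration is exhaustive and both steps are sound.
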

\begin{proof}
If two triangles of $\G$ have some vertices in common, then either $\G$ or $\G_2$ has a $4$-cycle, which is a contradiction.
\end{proof}

Now, we are ready to prove the main result of this section. To end this, we use the notion of distance between two subgraphs of a graph as the length of the shortest path connecting a vertex of the first subgraph to a vertex of the second subgraph.
\begin{theorem}\label{C4free}
Let $\G$ be a self $2$-distance graph with no $4$-cycle. Then either $\G$ is an odd cycle or it is the edged product $C_5|C_3$.
\begin{center}
\begin{tikzpicture}[scale=1]
\node [circle,fill=black,inner sep=1pt] (A) at (30:1cm) {};
\node [circle,fill=black,inner sep=1pt] (B) at (90:1cm) {};
\node [circle,fill=black,inner sep=1pt] (C) at (150:1cm) {};
\node [circle,fill=black,inner sep=1pt] (D) at (210:1cm) {};
\node [circle,fill=black,inner sep=1pt] (E) at (270:1cm) {};
\node [circle,fill=black,inner sep=1pt] (F) at (330:1cm) {};
\draw (A)--(B)--(C)--(D)--(E)--(F)--(A)--(C); 
\end{tikzpicture}\\
$C_5|C_3$
\end{center}
\end{theorem}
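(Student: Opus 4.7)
The plan is to argue by contradiction, assuming $\G$ is neither an odd cycle nor isomorphic to $C_5|C_3$. Since $\G$ is not an odd cycle, Lemmas \ref{C4-free:Delta=3}, \ref{C4-free:Cn}, and \ref{C4-free:C3C3} apply and give $\Delta(\G)=3$, no cycle of length exceeding three, and pairwise vertex-disjoint triangles. So $\G$ (assumed connected after restricting to a component) is a cactus whose blocks are $K_2$'s and $K_3$'s with vertex-disjoint $K_3$-blocks.

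Let $t=\nabla(\G)$, and for each $i$ write $s_i$ (resp.\ $t_i$) for the number of non-triangle (resp.\ triangle) vertices of degree $i$. Two counts drive the argument. First, Lemma \ref{linegraphdistancegraph} applied under $|E(\G)|=|E(\G_2)|$ yields
\[ n_1 = 3n_3 - 6t, \]
with $n_i$ denoting the number of degree-$i$ vertices. Second, each non-triangle degree-$3$ vertex is the center of an induced claw (its neighbors are pairwise non-adjacent, or else it would lie in a triangle), producing a triangle in $\G_2$ on those neighbors; conversely any three vertices pairwise at $\G$-distance $2$ must share a common $\G$-neighbor, for otherwise they would span a $6$-cycle, forbidden by Lemma \ref{C4-free:Cn}. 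Thus $\nabla(\G_2)=s_3$, and the isomorphism forces $s_3=t$.

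Next I would contract each triangle to a single vertex to obtain a tree $\G^*$. A degree-sum computation in $\G^*$ using $s_3=t$ gives $t_1=t+1$ and $s_1=3$. Moreover, exactly one triangle has two external edges (the ``special'' triangle), and the remaining $t-1$ triangles each have a single external edge (``pendant'' triangles). Each pendant triangle contributes precisely two leaves to $\G_2$: its two degree-$2$ vertices each have exactly one vertex at $\G$-distance $2$, namely the pendant triangle's unique external neighbor. Since $n_1(\G_2)=n_1(\G)=3$, this forces $2(t-1)\le 3$, hence $t\le 2$.

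It remains to dispose of $t\in\{0,1,2\}$. The case $t=0$ is immediate: $s_3=0$ and the absence of triangle-vertices give $n_3=0$, contradicting $\Delta(\G)=3$. For $t\in\{1,2\}$ the plan is to trace the isomorphism through the triangle(s) of $\G_2$ (each the distance-$2$ image of a claw centered at a non-triangle degree-$3$ vertex of $\G$) and enforce the degree pattern $(3,3,2)$ on the three neighbors of each such claw-center. The main obstacle is a case analysis on the placement of the special and pendant triangles along the legs of $\G^*$: in every admissible placement either some vertex of $\G_2$ attains degree $\ge 4$ (contradicting $\Delta(\G_2)=3$), or the number of degree-$3$ vertices in $\G_2$ differs from $n_3$. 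The tight rigidity derived above (three leaves, $t\le 2$, unique triangle distribution) keeps the number of sub-cases small and manageable.
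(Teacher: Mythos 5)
Your structural setup is sound and tracks the paper's own argument closely: you contract the (vertex-disjoint, by Lemma \ref{C4-free:C3C3}) triangles to get a tree, feed $|E(\G)|=|E(\G_2)|$ into Lemma \ref{linegraphdistancegraph}, and pin down the degree statistics. Your two counting identities are both correct ($n_1=3n_3-6t$ from the line-graph count, and the tree degree-sum relation), and combining them does force exactly one $2$-tailed triangle and $s_1=3$ pendant vertices (it even rules out $3$-tailed triangles automatically, which the paper excludes separately via a hexagon in $\G_2$). Together with your correct observation that $\nabla(\G_2)$ equals the number of non-triangle cubic vertices, and the leaf-counting step showing each $1$-tailed triangle contributes two leaves to $\G_2$, you legitimately obtain $t\le 2$; this is a genuinely cleaner reduction than anything stated explicitly in the paper. (Minor quibbles: ``$t_1=t+1$'' is not meaningful under your own definition of $t_i$, since no triangle vertex has degree one --- you presumably mean there are $t-1$ pendant triangles; and ``restricting to a component'' needs a word of justification, since an isomorphism $\G\cong\G_2$ need not carry a component of $\G$ to the $2$-distance graph of that same component.)

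The genuine gap is the endgame. For $t=1$ and $t=2$ the graph is a spider or caterpillar whose legs have \emph{unbounded} lengths, so ``a case analysis on the placement of the special and pendant triangles'' is not a finite check: the leg lengths remain free parameters, and your claimed dichotomy --- every admissible placement yields either a vertex of degree $\ge4$ in $\G_2$ or a mismatch in $n_3$ --- is asserted without proof and is not, in fact, how these configurations die. In the paper's treatment of exactly these two configurations (Figures 3.7.1 and 3.7.2), the contradictions come from comparing the distance between the triangle and the induced claw in $\G$ with the corresponding distance in $\G_2$ (forcing $c=4$), from edge counts with correction terms forcing $ab=0$ and $d=1$, and from exhibiting three induced claws in $\G_2$ where only $t\le 2$ are permitted --- none of which is a degree or $n_3$ argument. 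Until you supply arguments of this kind that eliminate \emph{all} leg-length choices in the $t=1$ and $t=2$ cases, the proof is incomplete; this is where essentially all of the work in the paper's proof lies.
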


\begin{proof}
Let $\G'$ be the graph obtained from $\G$ by contracting all triangles into single vertices. By Lemmas \ref{C4-free:Cn} and \ref{C4-free:C3C3}, $\G'$ is a tree. Let $v$ and $v'$ ($e$ and $e'$) be the number of vertices (edges) of $\G$ and $\G'$, respectively. Also, let $n_i$ be the number of vertices of degree $i$ in $\G$ for $i=1,2,3$. Clearly, $v'=v-2\nabla(\G)$ and $e'=e-3\nabla(\G)$. Since $\G'$ is a tree, we have $e'=v'-1$, which implies that $\nabla(\G)=e-v+1$. On the other hand, by Lemma \ref{linegraphdistancegraph}, $e_L-e=3\nabla(\G)$, where $e_L$ is the number of edges of $L(\G)$, the line graph of $\G$. Now, we have
\begin{align*}
|V(\G)|&=n_1+n_2+n_3,\\
|E(\G)|&=\frac{1}{2}\sum_{v\in V(\G)}\deg_\G(v)=\frac{n_1+2n_2+3n_3}{2},\\
|E(L(\G))|&=\sum_{v\in V}\binom{\deg_\G(v)}{2}=n_2+3n_3,
\end{align*}
from which it follows that $n_1=3$.

If $\G$ has no triangles then $\G$ is a tree so that $\G_2$ is disconnected, which is a contradiction. Hence $\G$ has some triangles. A triangle in $\G$ is said to be $i$-tailed if it contains $i$ cubic vertices. Clearly, $\G$ has no $3$-tailed triangle for otherwise $\G_2$ must have a hexagon contradicting Lemma \ref{C4-free:C6}. Suppose $\G$ has no $1$-tailed triangle. Hence, we have no induced claws with two pendants, which implies that $\G$ has only one induced claw along with only one $2$-tailed triangle as drawn in Figure 3.7.1, where $a,b,d\geq1$ and $c\geq0$. Clearly, $c\neq1$ for otherwise $\deg_{\G_2}(u)=4$, which is impossible. A simple verification shows that $d_\G(\mbox{triangle},\mbox{claw})=c$ and 
\[d_{\G_2}(\mbox{triangle},\mbox{claw})=\begin{cases}\frac{c+4}{2},&c\ \mbox{is even},\\\\\frac{c-3}{2},&c\ \mbox{is odd}.\end{cases}\]
Since $\G\cong\G_2$ this implies that $c=4$. On the other hand, we know that 
\[|E(\G)|=a+b+c+d+4\]
and
\[|E(\G_2)|=a+b+c+d+5-[\frac{1}{d}]\]
when $c\geq2$. But then $d=1$ and $a\pm1,b\mp1=2,3$, from which it follows that $\G_2\not\cong\G$, a contradiction.

\begin{center}
\begin{tikzpicture}[scale=0.75]
\node [circle,fill=black,inner sep=1pt] (O) at (0,0) {};
\node [circle,fill=black,inner sep=1pt] (C1) at (0:1cm) {};
\node [circle,fill=black,inner sep=1pt] (C2) at (0:2cm) {};
\node [circle,fill=black,inner sep=1pt] (C3) at (0:3cm) {};
\node [circle,fill=black,inner sep=1pt,label=above:{$u$}] (C4) at (0:4cm) {};
\node [circle,fill=black,inner sep=1pt] (C5) at (0:5cm) {};
\node [circle,fill=black,inner sep=1pt] (C6) at (0:6cm) {};
\node [circle,fill=black,inner sep=1pt] (C7) at (0:7cm) {};
\node [circle,fill=black,inner sep=1pt] (C8) at (0:8cm) {};

\node [circle,fill=black,inner sep=1pt] (C10) at ({5+cos(60)},{sin(60)}) {};
\node [circle,fill=black,inner sep=1pt] (B1) at (120:1cm) {};
\node [circle,fill=black,inner sep=1pt] (B2) at (120:2cm) {};
\node [circle,fill=black,inner sep=1pt] (A1) at (240:1cm) {};
\node [circle,fill=black,inner sep=1pt] (A2) at (240:2cm) {};

\draw [dotted] (C1)--(C2);
\draw [dotted] (C7)--(C8);
\draw [dotted] (B1)--(B2);
\draw [dotted] (A1)--(A2);
\draw (O)--(C1);
\draw (O)--(B1);
\draw (O)--(A1);
\draw (C2)--(C7);

\draw (C5)--(C10)--(C6);

\draw [decorate,decoration={brace,mirror,raise=2pt}] (C1)--(C4);
\draw [decorate,decoration={brace,mirror,raise=2pt}] (C7)--(C8);
\draw [decorate,decoration={brace,mirror,raise=2pt}] (B1)--(B2);
\draw [decorate,decoration={brace,raise=2pt}] (A1)--(A2);

\node at (-0.25,1.5) {$a$};
\node at (-0.25,-1.5) {$b$};
\node at (2.5,-0.5) {$c$};
\node at (7.5,-0.5) {$d$};

\draw [dashed] (C1)--(B1)--(A1)--(C1);
\draw [dashed] (C4) to [out=135, in=45] (C2);
\draw [dashed] (C4) to [out=-45, in=225] (C6);
\draw [dashed] (C4)--(C10);
\end{tikzpicture}\\
Figure 3.7.1
\end{center}

Therefore, $\G$ has a $1$-tailed triangle. Such a triangle arises from an induced claw with two pendants in $\G$. Since $\G$ has exactly three pendants, it can be drawn in the plane (see Figure 3.7.2) with one further triangle having an edge in the dotted areas, where $a,b\geq0$ and $c\geq1$ denote the number of vertices in the corresponding dotted areas. We note that every triangle in $\G_2$ arises from an induced claw in $\G$. A simple verification shows that
\[|E(\G)|=a+b+c+9\]
and
\[|E(\G_2)|=a+b+c+8+[\frac{1}{a+1}]+[\frac{1}{b+1}],\]
which implies that $ab=0$. Clearly, $c=1$ for otherwise $\deg_{\G_2}(o)\geq4$, which is impossible.

\begin{center}
\begin{tikzpicture}[scale=0.75]
\node [circle,fill=black,inner sep=1pt] (A3) at (0,0) {};
\node [circle,fill=black,inner sep=1pt] (A2) at (1,0) {};
\node [circle,fill=black,inner sep=1pt] (A1) at (2,0) {};
\node [circle,fill=black,inner sep=1pt] (O) at (3,0) {};
\node [circle,fill=black,inner sep=1pt] (B1) at (4,0) {};
\node [circle,fill=black,inner sep=1pt] (B2) at (5,0) {};
\node [circle,fill=black,inner sep=1pt] (B3) at (6,0) {};
\node [circle,fill=black,inner sep=1pt] (C1) at (3,1) {};
\node [circle,fill=black,inner sep=1pt] (C2) at (3,2) {};

\node [circle,fill=black,inner sep=1pt] (D1) at ({cos(150)},{sin(150)}) {};
\node [circle,fill=black,inner sep=1pt] (D2) at ({cos(-150)},{sin(-150)}) {};

\node [circle,fill=black,inner sep=1pt] (E1) at ({6+cos(30)},{sin(30)}) {};
\node [circle,fill=black,inner sep=1pt] (E2) at ({6+cos(-30)},{sin(-30)}) {};

\draw (D1)--(A3)--(D2);
\draw (A3)--(A2);
\draw (A1)--(B1);
\draw (B2)--(B3)--(E1)--(E2)--(B3);
\draw (O)--(C1);

\draw [dotted] (A2)--(A1);
\draw [dotted] (B1)--(B2);
\draw [dotted] (C1)--(C2);

\draw [decorate,decoration={brace,raise=2pt}] (A1)--(A2);
\draw [decorate,decoration={brace,raise=2pt}] (B2)--(B1);
\draw [decorate,decoration={brace,raise=2pt}] (C1)--(C2);

\node at (1.5,-0.5) {$a$};
\node at (4.5,-0.5) {$b$};
\node at (2.5,1.5) {$c$};
\node at (3,-0.5) {$o$};
\end{tikzpicture}\\
Figure 3.7.2
\end{center}

First assume that $a=0$. Then the graph $\G$ can be drawn  as in Figure 3.7.3. Note that $|A|\geq3$ for otherwise $A$ has a vertex of degree $\geq4$ in $\G_2$, which is impossible. This implies that two triangles in $\G_2$ are at distance at least five and so we must have $|B|\geq4$. But then we obtain three induced claws in $\G_2$ as drawn in Figure 3.7.3 with dashes, which is a contradiction.

\begin{center}
\begin{tikzpicture}[scale=0.75]
\node at (1.0,-0.5) {$o$};
\node at (2.5,0.75) {$A$};
\node at (9.5,0.75) {$B$};
\fill [gray!50] (2.5,0) ellipse (1.75cm and 0.3cm);
\fill [gray!50] (9.5,0) ellipse (2.75cm and 0.3cm);

\node [circle,fill=black,inner sep=1pt] (O) at (1,0) {};
\node [circle,fill=black,inner sep=1pt] (A1) at (2,0) {};
\node [circle,fill=black,inner sep=1pt] (A2) at (3,0) {};
\node [circle,fill=black,inner sep=1pt] (A3) at (4,0) {};

\node [circle,fill=black,inner sep=1pt] (T1) at (5,0) {};
\node [circle,fill=black,inner sep=1pt] (T2) at (6,0) {};
\node [circle,fill=black,inner sep=1pt] (T3) at ({5+cos(60)},{sin(60)}) {};

\node [circle,fill=black,inner sep=1pt] (B1) at (7,0) {};
\node [circle,fill=black,inner sep=1pt] (B2) at (8,0) {};
\node [circle,fill=black,inner sep=1pt] (B3) at (9,0) {};
\node [circle,fill=black,inner sep=1pt] (B4) at (10,0) {};
\node [circle,fill=black,inner sep=1pt] (B5) at (11,0) {};
\node [circle,fill=black,inner sep=1pt] (B6) at (12,0) {};

\node [circle,fill=black,inner sep=1pt] (C1) at (1,1) {};

\node [circle,fill=black,inner sep=1pt] (D0) at (0,0) {};
\node [circle,fill=black,inner sep=1pt] (D1) at ({cos(150)},{sin(150)}) {};
\node [circle,fill=black,inner sep=1pt] (D2) at ({cos(-150)},{sin(-150)}) {};

\node [circle,fill=black,inner sep=1pt] (E0) at (13,0) {};
\node [circle,fill=black,inner sep=1pt] (E1) at ({13+cos(30)},{sin(30)}) {};
\node [circle,fill=black,inner sep=1pt] (E2) at ({13+cos(-30)},{sin(-30)}) {};

\draw (D1)--(D0)--(D2);
\draw (D0)--(O);
\draw (O)--(C1);

\draw (A1)--(B3);
\draw (B4)--(E0)--(E1)--(E2)--(E0);
\draw (T1)--(T3)--(T2);
\draw [dotted] (O)--(A1);
\draw [dotted] (B3)--(B4);

\draw [dashed] (A3)--(T3);
\draw [dashed] (A1) to [out=-45, in=-135] (A3);
\draw [dashed] (A3) to [out=-45, in=-135] (T2);

\draw [dashed] (B1)--(T3);
\draw [dashed] (T1) to [out=-45, in=-135] (B1);
\draw [dashed] (B1) to [out=-45, in=-135] (B3);

\draw [dashed] (E1)--(B6)--(E2);
\draw [dashed] (B4) to [out=-45, in=-135] (B6);
\end{tikzpicture}\\
Figure 3.7.3
\end{center}

Finally assume that $b=0$. If $|A|=1$, then two induced claws are connected with two triangles with distance zero while it is not true in $\G_2$. Hence, $|A|\geq2$. If $|A|=2$, then $A$ has a vertex of degree four in $\G_2$, which is impossible. Thus $|A|\geq3$. Similarly, $|B|\geq3$ for otherwise it has a vertex of degree four in $\G_2$, a contradiction. But then we obtain three induced claws in $\G_2$ as drawn in Figure 3.7.4 with dashes, which is a contradiction.

\begin{center}
\begin{tikzpicture}[scale=0.75]
\node at (11.0,-0.5) {$o$};
\node at (1.5,0.75) {$A$};
\node at (8.5,0.75) {$B$};
\fill [gray!50] (1.5,0) ellipse (1.75cm and 0.3cm);
\fill [gray!50] (8.5,0) ellipse (2.75cm and 0.3cm);

\node [circle,fill=black,inner sep=1pt] (A1) at (1,0) {};
\node [circle,fill=black,inner sep=1pt] (A2) at (2,0) {};
\node [circle,fill=black,inner sep=1pt] (A3) at (3,0) {};

\node [circle,fill=black,inner sep=1pt] (T1) at (4,0) {};
\node [circle,fill=black,inner sep=1pt] (T2) at (5,0) {};
\node [circle,fill=black,inner sep=1pt] (T3) at ({4+cos(60)},{sin(60)}) {};

\node [circle,fill=black,inner sep=1pt] (B1) at (6,0) {};
\node [circle,fill=black,inner sep=1pt] (B2) at (7,0) {};
\node [circle,fill=black,inner sep=1pt] (B3) at (8,0) {};
\node [circle,fill=black,inner sep=1pt] (B4) at (9,0) {};
\node [circle,fill=black,inner sep=1pt] (B5) at (10,0) {};

\node [circle,fill=black,inner sep=1pt] (C1) at (11,1) {};

\node [circle,fill=black,inner sep=1pt] (D0) at (0,0) {};
\node [circle,fill=black,inner sep=1pt] (D1) at ({cos(150)},{sin(150)}) {};
\node [circle,fill=black,inner sep=1pt] (D2) at ({cos(-150)},{sin(-150)}) {};

\node [circle,fill=black,inner sep=1pt] (E0) at (12,0) {};
\node [circle,fill=black,inner sep=1pt] (E1) at ({12+cos(30)},{sin(30)}) {};
\node [circle,fill=black,inner sep=1pt] (E2) at ({12+cos(-30)},{sin(-30)}) {};

\node [circle,fill=black,inner sep=1pt] (O) at (11,0) {};

\draw (D1)--(D0)--(D2);
\draw (A1)--(B3);
\draw (B4)--(E0)--(E1)--(E2)--(E0);
\draw (O)--(C1);
\draw (T1)--(T3)--(T2);
\draw [dotted] (D0)--(A1);
\draw [dotted] (B3)--(B4);

\draw [dashed] (A3)--(T3);
\draw [dashed] (A1) to [out=-45, in=-135] (A3);
\draw [dashed] (A3) to [out=-45, in=-135] (T2);

\draw [dashed] (B1)--(T3);
\draw [dashed] (T1) to [out=-45, in=-135] (B1);
\draw [dashed] (B1) to [out=-45, in=-135] (B3);

\draw [dashed] (E1)--(O)--(E2);
\draw [dashed] (B4) to [out=-45, in=-135] (O);
\end{tikzpicture}\\
Figure 3.7.4
\end{center}
The proof is complete.
\end{proof}
\section{Graphs with disjoint triangles}
Throughout this section, we assume that $\G\cong\G_2$ is a graph with disjoint triangles. Further we assume that $\G$ is not an odd cycle. As in section 3, we proceed by analysing the existence of special subgraphs in $\G$ is several lemmas. The following lemma is crucial in the proof of our results.
\begin{lemma}\label{disjoint-triangles:Delta}
We have $\Delta(\G)=3$.
\end{lemma}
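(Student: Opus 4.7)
The plan is to get the lower bound $\Delta(\G)\geq 3$ essentially for free, and then rule out $\Delta(\G)\geq 4$ by exhibiting two non-disjoint triangles in $\G_2\cong\G$.

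For the lower bound, if $\Delta(\G)\leq 2$ then $\G$ is a disjoint union of paths and cycles. A short case analysis of the $2$-distance graph of each such component (a path $P_n$ splits into two paths in $\G_2$; an even cycle $C_{2k}$ splits into two disjoint cycles in $\G_2$; an odd cycle $C_{2k+1}$ maps to itself) shows that the only connected self $2$-distance graphs with maximum degree at most $2$ are odd cycles. Since $\G$ is assumed not to be an odd cycle, we must have $\Delta(\G)\geq 3$.

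For the upper bound, suppose toward a contradiction that some vertex $v\in V(\G)$ has $\deg_\G(v)\geq 4$ and pick four distinct neighbors $v_1,v_2,v_3,v_4$ of $v$. The key observation is that at most one pair among the $v_i$ can be adjacent in $\G$: indeed, if both $v_1\sim v_2$ and $v_3\sim v_4$ then $\{v,v_1,v_2\}$ and $\{v,v_3,v_4\}$ are two triangles sharing the vertex $v$, contradicting the disjoint triangles hypothesis. Now for any pair $v_i,v_j$ that is not adjacent in $\G$, the path $v_i v v_j$ certifies $\rho_\G(v_i,v_j)=2$, so $v_i$ and $v_j$ are adjacent in $\G_2$.

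The contradiction now splits into two cases. If no pair among $v_1,v_2,v_3,v_4$ is adjacent in $\G$, then all six pairs are adjacent in $\G_2$, so $\{v_1,v_2,v_3,v_4\}$ induces a $K_4$ in $\G_2\cong\G$; but $K_4$ contains triangles sharing edges, contradicting disjointness of triangles in $\G$. If exactly one pair is adjacent in $\G$, say $v_1\sim v_2$, then in $\G_2$ all five remaining pairs $\{v_1,v_3\},\{v_1,v_4\},\{v_2,v_3\},\{v_2,v_4\},\{v_3,v_4\}$ are edges, so $\{v_1,v_2,v_3,v_4\}$ induces a $K_4$ minus the edge $v_1v_2$ in $\G_2$; this book contains the two triangles $\{v_1,v_3,v_4\}$ and $\{v_2,v_3,v_4\}$ sharing the edge $v_3v_4$, again contradicting the disjoint triangles property of $\G\cong\G_2$. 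Either way $\Delta(\G)\leq 3$, and combined with the lower bound we conclude $\Delta(\G)=3$.

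I do not expect any real obstacle here: the whole argument is a two-step case analysis on how many edges the set $N_\G(v)$ spans, and the disjoint triangles hypothesis immediately caps that count at one. The only mild subtlety is checking that in the second case the two $\G_2$-triangles really are forced to share the edge $v_3v_4$ rather than merging into something smaller, which follows because $v_1\not\sim_{\G_2}v_2$ (they are adjacent in $\G$, hence at distance $1$, not $2$).
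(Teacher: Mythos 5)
Your proof is correct and follows essentially the same route as the paper: the paper notes that the disjoint-triangles hypothesis forces $N_\G(v)$ to induce at most one edge and that $N_\G(v)^c$ is a subgraph of $\G_2$, which is exactly your observation that a degree-$4$ vertex would force a $K_4$ or $K_4$ minus an edge (hence intersecting triangles) in $\G_2$. You are merely more explicit than the paper about the lower bound $\Delta(\G)\geq 3$, which the paper leaves to the standing assumption that $\G$ is not an odd cycle.
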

\begin{proof}
Let $v$ be a vertex of $\G$. Clearly, $N_\G(v)$ is a union of isolated vertices and at most one edge. Now, since $N_\G(v)^c$ is a subgraph of $\G_2$, we must have $|N_\G(v)|\leq3$, as required.
\end{proof}
\begin{lemma}\label{disjoint-triangles:C5|C3}
If $\G$ has a $C_5|C_3$ subgraph, then $\G$ is isomorphic to $C_5|C_3$.
\end{lemma}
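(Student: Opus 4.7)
The plan is to adapt the strategy of Lemma \ref{C4-free:C5C3}, replacing its no-$4$-cycle obstructions by obstructions coming from the disjointness of triangles in $\G_2$. Assume for contradiction that $\G\ncong C_5|C_3$ contains a subgraph $S\cong C_5|C_3$ with vertices labelled as in Figure 3.1.1. First note that $S$ must be induced: any chord of the hexagon $a,b,c,d,e,f$ would create a second triangle sharing a vertex with $\{a,b,f\}$, violating the disjointness hypothesis. Since $\G\ne S$, fix a vertex $v\in V(\G)\setminus S$ adjacent to some vertex of $S$. Because $\Delta(\G)=3$ and the temples $b,f$ already have degree three in $S$, the vertex $v$ must be adjacent to one of $\{a,c,d,e\}$.

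The main lever for all cases is the following observation: $\{a,c,e\}$ forms a triangle of $\G_2$, since each pair of these vertices has a common $\G$-neighbour in $\{b,d,f\}$. A direct computation from $S$ alone already gives $\deg_{\G_2}(c)=\deg_{\G_2}(e)=3$ (with neighbours $\{a,e,f\}$ and $\{a,b,c\}$ respectively) and $\deg_{\G_2}(a)\geq 2$. Consequently any additional $\G_2$-neighbour of $c$ or $e$ exceeds $\Delta(\G_2)=3$, and under $\G\cong\G_2$ the triangle $\{a,c,e\}$ of $\G_2$ must be vertex-disjoint from every other triangle of $\G_2$.

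The cleanest case is the chin, $v\sim d$. If $v\not\sim c$ and $v\not\sim e$, the length-two paths $v$--$d$--$c$ and $v$--$d$--$e$ force $v$ to be $\G_2$-adjacent to both $c$ and $e$, so $\{v,c,e\}$ is a triangle of $\G_2$ sharing the edge $ce$ with $\{a,c,e\}$; the remaining sub-cases ($v\sim\{c,d\}$, $v\sim\{d,e\}$, $v\sim\{a,d\}$, or $v$ with an outside neighbour) either produce an analogous triangle overlapping $\{a,c,e\}$ at $c$ or $e$, or push $\deg_{\G_2}(c)$, $\deg_{\G_2}(e)$, or $\deg_{\G_2}(a)$ beyond three through a shared neighbour.

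The jaw case $v\sim c$ (symmetrically $v\sim e$) and the forehead case $v\sim a$ require more delicate analysis, since no triangle of $\G_2$ overlapping $\{a,c,e\}$ is forced immediately. My plan is to exploit the saturation of $c$ and $e$: every additional $\G$-neighbour of $v$ that is adjacent to a $\G$-neighbour of $c$ or $e$ creates a new distance-two path that overflows the degree bound, and each attempt to add a second $S$-neighbour of $v$ either repeats the chin analysis or forces an extra $\G_2$-triangle meeting $\{a,c,e\}$. These constraints pin down $v$ to $\G$-degree $1$ with a single $S$-neighbour, at which point $(S\cup\{v\})_2\not\cong S\cup\{v\}$, so a further external vertex $u$ must attach to $S$; repeating the same case analysis on $u$ together with the restrictions on $v$ yields the final contradiction. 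The main obstacle will be the forehead sub-case, where one must carefully chain the $\Delta(\G_2)=3$ constraints at $a,c,e$ with the triangle-disjointness condition in $\G_2$ to eliminate all ways of extending $v$ and $u$.
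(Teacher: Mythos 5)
Your opening claim --- that $S$ must be induced because every chord of the hexagon $a,b,c,d,e,f$ creates a second triangle meeting $\{a,b,f\}$ --- is false, and the argument breaks exactly there. The chord $\{c,e\}$ joining the two jaws creates the triangle $\{c,d,e\}$, which is vertex-disjoint from $\{a,b,f\}$ and therefore perfectly compatible with the disjoint-triangles hypothesis; the paper must treat this jaws-adjacent configuration as a separate case (its Case 2), which it resolves by attaching a new vertex to $a$ or $d$ and exhibiting two overlapping triangles in $(\G_2)_2$. The chord $\{a,d\}$ is even worse for your claim: it creates no new triangle at all, so it too needs an explicit argument (degree or distance based) rather than an appeal to disjointness. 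Since everything downstream in your proposal --- the saturated neighbourhoods $N_{\G_2}(c)=\{a,e,f\}$ and $N_{\G_2}(e)=\{a,b,c\}$, and hence the protected triangle $\{a,c,e\}$ of $\G_2$ --- is computed from an \emph{induced} copy of $C_5|C_3$, the whole mechanism is unavailable in the jaws-adjacent configuration, which your proof never revisits.

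A second, independent problem is that for the jaw and forehead attachments you offer only a plan (``my plan is to exploit the saturation of $c$ and $e$,'' ``the main obstacle will be the forehead sub-case'') rather than an argument. These are precisely the sub-cases where the paper does the real work: for a jaw attachment it shows the new vertex $g$ satisfies $N_S(g)=\{c\}$, passes to $\G_2$, observes that the image of $S$ is again a $C_5|C_3$ with $g$ hanging off its chin, and reuses the chin analysis; for the forehead it transports the configuration to $\G_2$ once more to reduce to the jaw case. Your chin analysis itself is sound, and the observation that $N_{\G_2}(c)$ and $N_{\G_2}(e)$ are already full is a clean lever that the paper could have used; but as written the proposal dismisses one genuine case on false grounds and leaves the two hardest remaining cases unproved.
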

\begin{proof}
Suppose on the contrary that $\G$ is not isomorphic to $C_5|C_3$ and consider a subgraph $S$ of $\G$ isomorphic to $C_5|C_3$ as drawn in Figure 3.1.1. We proceed in two steps.

Case 1. The jaws are non-adjacent. Hence there is a vertex in $\G\setminus S$ adjacent to some vertex of $S$. First suppose that the chin $d$ is adjacent to some new vertex $g$. If $g$ is not adjacent to jaws $c$ and $e$, then we have two triangles $\{a,c,e\}$ and $\{c,e,g\}$ with a common edge in $\G_2$ contradicting the assumption. Hence, $g$ is adjacent to exactly one of the jaws, say $c$. But then we have two triangles $\{a,c,e\}$ and $\{b,e,g\}$ in $\G_2$ with a common vertex, which is another contradiction. Therefore, $N_\G(d)=\{c,e\}$. Next assume that a jaw, say $c$, is adjacent to a new vertex $g$. Clearly, $b,d,f\notin N_\G(g)$. If $g$ and $e$ are adjacent, then we have two triangles $\{b,d,g\}$ and $\{d,f,g\}$ with a common edge in $\G_2$, a contradiction. Hence $N_S(g)=\{c\}$ and the subgraph induce by $\{a,b,c,d,e,f\}$ in $\G_2$ is isomorphic to $C_5|C_3$ with $g$ adjacent to its chin, which is impossible by the previous discussions. Clearly the temples are not adjacent to any vertex of $\G\setminus S$. Hence, the forehead $a$ must be adjacent to a new vertex $g$ so that the subgraph induced by $\{a,b,c,d,e,f\}$ in $\G_2$ is isomorphic to $C_5|C_3$ with $g$ adjacent to its jaws, which is impossible by previous arguments.

Case 2. The jaws are adjacent. Since the subgraph induced by $S$ is not a self $2$-distance graph, one of the foreheads or chin must be adjacent to a new vertex $g$, say $a$ and $g$ are adjacent. Then we have two triangles $\{c,d,e\}$ and $\{c,e,g\}$ in $(\G_2)_2$, which is a contradiction.
\end{proof}
\begin{lemma}\label{disjoint-triangles:C6}
The graph $\G$ does not have any hexagon.
\end{lemma}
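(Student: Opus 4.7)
Suppose for contradiction that $\G$ contains a hexagon $H : a_1 a_2 \cdots a_6 a_1$. By Lemma \ref{disjoint-triangles:C5|C3} we may assume that $\G$ contains no $C_5|C_3$ subgraph. First I would rule out \emph{short} chords of $H$: a short chord $a_i a_{i+2}$ produces a triangle $a_i a_{i+1} a_{i+2}$ sharing its edge $a_i a_{i+2}$ with the pentagon $a_i a_{i+2} a_{i+3} a_{i+4} a_{i+5} a_i$, giving a $C_5|C_3$ subgraph, which is forbidden. Whether or not $H$ has a long chord $a_i a_{i+3}$, each pair $\{a_i, a_{i+2}\}$ is at distance exactly $2$ in $\G$ (via $a_{i+1}$), so $\G_2$ contains two vertex-disjoint triangles $T_1 = \{a_1, a_3, a_5\}$ and $T_2 = \{a_2, a_4, a_6\}$.

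Since $\G \cong \G_2$ inherits the disjoint-triangles property, no vertex of $T_1 \cup T_2$ may lie in any further triangle of $\G_2$. Combined with $\Delta(\G_2) = 3$ from Lemma \ref{disjoint-triangles:Delta}, each $a_i$ admits at most one $\G_2$-neighbor beyond the two hexagon vertices already in its triangle, and that extra neighbor cannot be $\G_2$-adjacent to those two. If every $a_i$ has $\G$-degree two, then $H$ is a $C_6$-component of $\G$; using $(C_6)_2 = 2K_3$, $(K_3)_2 = 3K_1$, and $(K_1)_2 = K_1$ and matching components across the isomorphism $\G \cong \G_2$, the balance of $K_1$-components forces $\G$ to have no $K_3$-component, and then the balance of $K_3$-components forces $\G$ to have no $C_6$-component, contradicting $H \subseteq \G$.

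Hence some $a_i$, without loss of generality $a_1$, has an external neighbor $g$. The remainder is a case analysis --- paralleling the proof of Lemma \ref{C4-free:C6} --- on $\deg_\G(g)$, on whether $g$ is adjacent to another $a_j$, on which hexagon vertices carry external neighbors, and on the presence or absence of long chords. In every configuration one either exhibits a vertex of $\G_2$ of degree at least four, or a triangle of $\G_2$ sharing a vertex with $T_1$ or $T_2$, contradicting our standing hypotheses. The main obstacle is this final enumeration: the disjoint-triangles setting permits $4$-cycles and long chords that were unavailable in Lemma \ref{C4-free:C6}, so the configurations to check are more numerous and the bookkeeping of forced $\G_2$-edges is correspondingly more delicate.
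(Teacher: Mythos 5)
There is a genuine gap: you set up the right framework (short chords are excluded because they create a $C_5|C_3$, and $\G_2$ contains the two disjoint triangles $\{a_1,a_3,a_5\}$ and $\{a_2,a_4,a_6\}$ -- exactly as in the paper), but you then explicitly defer the decisive step, writing that ``the remainder is a case analysis'' and that ``the main obstacle is this final enumeration.'' That enumeration \emph{is} the proof; asserting that every configuration leads to a contradiction without exhibiting the contradictions is not a proof. The paper closes this gap in two concrete moves: first, the external neighbor $g$ of $a_1$ must be adjacent to \emph{exactly one} of $a_2,a_6$ -- if both, then $\{g,a_1,a_2\}$ and $\{g,a_1,a_6\}$ share the edge $\{g,a_1\}$ in $\G$; if neither, then $g,a_2,a_6$ and $a_2,a_4,a_6$ are triangles of $\G_2$ sharing the edge $\{a_2,a_6\}$. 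Second, once $g$ is adjacent to, say, $a_1$ and $a_2$ only, Lemma \ref{disjoint-triangles:C5|C3} pins down the induced subgraph of $\G_2$ on these seven vertices, and one reads off that $g$ has degree at least four in $(\G_2)_2$, contradicting Lemma \ref{disjoint-triangles:Delta}. Your proposal never isolates this dichotomy for $g$, which is what collapses the ``more numerous configurations'' you worry about down to a single picture.

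A secondary weak point: your component-counting argument for the case where the hexagon is a $C_6$-component is not airtight as stated, because components of $\G$ other than $K_1$, $K_3$ and $C_6$ can also contribute $K_1$- or $K_3$-components to $\G_2$ (e.g.\ any component with a dominating vertex yields isolated vertices in $\G_2$), so the ``balance'' you invoke does not follow from the three identities you list alone. This case is anyway subsumed once you establish, as the paper does, that some hexagon vertex must have an external neighbor.
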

\begin{proof}
Suppose on the contrary that $\G$ has a hexagon $S$ as in Figure 4.3.1 with vertices $a,b,c,d,e,f$. Since there is no subgraph isomorphic to $C_5|C_3$ in $\G$, the only possible chords of $S$ are $\{a,d\}$, $\{b,e\}$ or $\{c,f\}$. Since $S$ is not a self $2$-distance graph, we may assume that $a$ is adjacent to a new vertex $g$. Clearly, $g$ is adjacent to exactly one of $b$ or $f$, say $b$, for otherwise either $\G$ or $\G_2$ has two triangles with a common edge. Now, by using Lemma \ref{disjoint-triangles:C5|C3}, one can easily see that the vertices $a,b,c,d,e,f,g$ induce a subgraph in $\G_2$ as drawn with dashes in Figure 4.3.1. Hence, the degree of $g$ in $(\G_2)_2$ is at least four, which is a contradiction.
\end{proof}
\begin{center}
\begin{tikzpicture}[scale=1]
\node [circle,fill=black,inner sep=1pt] (A) at (60:1cm) {};
\node [circle,fill=black,inner sep=1pt] (B) at (120:1cm) {};
\node [circle,fill=black,inner sep=1pt] (C) at (180:1cm) {};
\node [circle,fill=black,inner sep=1pt] (D) at (240:1cm) {};
\node [circle,fill=black,inner sep=1pt] (E) at (300:1cm) {};
\node [circle,fill=black,inner sep=1pt] (F) at (360:1cm) {};
\node [circle,fill=black,inner sep=1pt] (G) at (90:1.732cm) {};
\draw (A)--(B)--(C)--(D)--(E)--(F)--(A)--(G)--(B); 
\draw [dashed] (A)--(C)--(E)--(A);
\draw [dashed] (B)--(D)--(F)--(B);
\draw [dashed] (G) to [out=200,in=100] (C);
\draw [dashed] (G) to [out=-20,in=80] (F);
\node  at (60:1.25cm) {$a$};
\node  at (120:1.25cm) {$b$};
\node  at (180:1.25cm) {$c$};
\node  at (240:1.25cm) {$d$};
\node  at (300:1.25cm) {$e$};
\node  at (360:1.25cm) {$f$};
\node  at (90:2cm) {$g$};
\end{tikzpicture}\\
Figure 4.3.1
\end{center}
\begin{lemma}\label{disjoint-triangles:C5}
The graph $\G$ does not have any pentagon.
\end{lemma}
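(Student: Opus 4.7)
The plan is to argue by contradiction: assume $\Gamma$ contains a pentagon $S=v_1v_2v_3v_4v_5$. I will use throughout that Lemmas~\ref{disjoint-triangles:C5|C3} and~\ref{disjoint-triangles:C6} apply to $\Gamma_2\cong\Gamma$ just as they do to $\Gamma$, so neither graph contains a $C_5|C_3$ subgraph (having tacitly excluded $\Gamma\cong C_5|C_3$, since otherwise the conclusion of the present lemma already fails) or a hexagon. The key observation is that when $S$ is induced, the five distance-$2$ pairs of $S$ automatically form a pentagon $v_1v_3v_5v_2v_4$ inside $\Gamma_2$, so any triangle in $\Gamma_2$ sharing an edge with it would produce a forbidden $C_5|C_3$ subgraph.

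Suppose first that $S$ is induced. Since $\Gamma\neq C_5$, some $v\notin S$ is adjacent to $S$, WLOG to $v_1$; I split on $N_S(v)$. If $v\sim v_1,v_2$, then $S\cup\{v\}$ induces $C_5|C_3$, contradicting Lemma~\ref{disjoint-triangles:C5|C3}. If $v\sim v_1,v_3$, then in $\Gamma$ the pairs $vv_2,vv_4,vv_5,v_2v_4,v_2v_5$ all have distance exactly $2$ (via $v_1$, $v_3$, or pentagon edges), so the triangles $\{v,v_2,v_4\}$ and $\{v,v_2,v_5\}$ both lie in $\Gamma_2$ and share the edge $vv_2$, violating the disjoint-triangle hypothesis transported to $\Gamma_2$. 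If $v\sim v_1$ only, then $v$ is joined in $\Gamma_2$ to $v_2$ and $v_5$ through $v_1$, and the triangle $\{v,v_2,v_5\}$ glues onto the pentagon $v_1v_3v_5v_2v_4$ of $\Gamma_2$ along $v_2v_5$ to give a $C_5|C_3$ in $\Gamma_2$, again contradicting Lemma~\ref{disjoint-triangles:C5|C3}.

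If instead $S$ has a chord, then the disjoint-triangle condition together with $\Delta(\Gamma)=3$ forces exactly one chord, WLOG $v_1v_3$, producing the triangle $\{v_1,v_2,v_3\}$ with $\deg(v_1)=\deg(v_3)=3$. The resulting house graph is not self-$2$-distance (a quick edge count gives $|E(\mathrm{house})|=6$ while $\mathrm{house}_2=P_5$ has only $4$ edges), so some external $v$ is adjacent to $v_2$, $v_4$, or $v_5$. If $v\sim v_2$, a direct verification of the six relevant distance-$2$ pairs shows that $\Gamma_2[S\cup\{v\}]$ contains the hexagon $v_1v_4v_2v_5v_3vv_1$, contradicting Lemma~\ref{disjoint-triangles:C6}. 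The remaining case $v\sim v_4$ (symmetric to $v_5$) is the main obstacle: I would branch on $\deg(v)$ and on whether further external vertices attach to $v_5$, to $v$, or to $v_2$, using $\Delta(\Gamma_2)\le 3$, the edge-count equality $|E(\Gamma)|=|E(\Gamma_2)|$, and the absence of hexagons and $C_5|C_3$-subgraphs in $\Gamma_2$ to close every branch. Typical terminations are that too many distance-$2$ routes converge on $v_4$ (forcing $\deg_{\Gamma_2}(v_4)\ge 4$); that $v$ turns out to be adjacent also to $v_5$, recreating the hexagon $v_1v_4v_2v_5v_3vv_1$ in $\Gamma_2$; or that balancing edge counts forces an additional external onto $v_2$, reducing back to the hexagon case already handled. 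This bookkeeping is the genuinely hard part of the proof; the induced-pentagon case, by contrast, is uniformly clean.
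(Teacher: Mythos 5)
Your treatment of the induced-pentagon case is correct and matches the paper's Case 1 in spirit (both push the contradiction into $\G_2$ as a $C_5|C_3$ or a pair of edge-sharing triangles), and your handling of the chorded pentagon when the external vertex attaches to the apex $v_2$ reproduces the paper's hexagon-in-$\G_2$ argument. But the proposal has a genuine gap exactly where you flag it: the case where the external vertex $v$ attaches to a base vertex $v_4$ (or $v_5$) is only a plan, not a proof. You list candidate ways the branches might close (``too many distance-$2$ routes converge on $v_4$,'' ``edge counts force an external onto $v_2$,'' etc.) without verifying that every branch actually terminates in one of them, and at least one of the proposed mechanisms (the edge-count balancing) is not obviously sound without carrying out the computation. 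Since you yourself call this ``the genuinely hard part,'' the argument as written does not establish the lemma.

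For comparison, the paper closes this case concretely: writing the pentagon as $a,b,c,d,e$ with chord $\{b,e\}$ (so $a$ is the apex), it first shows the new vertex $f$ satisfies $N_S(f)=\{c\}$, then splits on whether $d$ also acquires a new neighbour $g$ or, failing that, whether $f$ has a further neighbour $g$ (one of these must occur because the configuration built so far is not a self $2$-distance graph). In both subcases it determines the induced subgraph of $\G_2$ on $a,b,c,d,e,f,g$ explicitly (Figures 4.4.2 and 4.4.3) and finds that the apex $a$ has degree at least four in $(\G_2)_2$, contradicting $\Delta\leq 3$. To complete your proof you would need to supply this (or an equivalent) finite case analysis; the degree-of-the-apex-in-$(\G_2)_2$ contradiction is the mechanism that actually works, rather than the edge-count or hexagon reductions you tentatively propose.
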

\begin{proof}
Suppose on the contrary that $\G$ has a pentagon $S$ with vertices $a,b,c,d,e$. We consider two cases:

Case 1. $S$ does not have any chord. Since $\G$ is not an odd cycle, we may assume that $a$ is adjacent to a new vertex $f$. By Lemma \ref{disjoint-triangles:C5|C3}, $f$ is not adjacent to $b$ and $e$, from which it follows that $\G_2$ has a subgraph isomorphic to $C_5|C_3$, a contradiction.

Case 2. $S$ has a chord. Clearly, $S$ has a unique chord, say $\{b,e\}$. Since $S$ is not a self $2$-distance graph, it has a vertex adjacent to a new vertex $f$. First suppose that $a$ and $f$ are adjacent. Since $\G_2$ does not have a subgraph isomorphic to $C_5|C_3$, either $c,d\in N_\G(f)$ or $c,d\notin N_\G(f)$. In both cases, the vertices $a,b,c,d,e,f$ induce a hexagon in $\G_2$, contradicting Lemma \ref{disjoint-triangles:C6} (see Figure 4.4.1). Therefore, $f$ is adjacent to $c$ or $d$, say $c$. Clearly, $N_S(f)=\{c\}$. If there is a vertex $g$ adjacent to $d$, then again $N_S(g)=\{d\}$. Now, by using Lemma \ref{disjoint-triangles:C5|C3}, one can easily see that the vertices $a,b,c,d,e,f,g$ induce a subgraph in $\G_2$ as drawn with dashes in Figure 4.4.2. Hence, the degree of $a$ in $(\G_2)_2$ is at least four, which is a contradiction. Therefore, $d$ is not adjacent to vertices other than $c$ and $e$. This implies that the vertex $f$ is adjacent to another vertex $g$ as in Figure 4.4.3. Again, by using Lemma \ref{disjoint-triangles:C5|C3}, the vertices $a,b,c,d,e,f,g$ induce a subgraph in $\G_2$ as drawn with dashes in Figure 4.4.3. Hence, the degree of $a$ in $(\G_2)_2$ is at least four, which is a contradiction. The proof is complete.
\end{proof}
\begin{center}
\begin{tabular}{ccccc}
\begin{tikzpicture}[scale=1]
\node [circle,fill=black,inner sep=1pt] (A) at (90:1cm) {};
\node [circle,fill=black,inner sep=1pt] (B) at (162:1cm) {};
\node [circle,fill=black,inner sep=1pt] (C) at (234:1cm) {};
\node [circle,fill=black,inner sep=1pt] (D) at (306:1cm) {};
\node [circle,fill=black,inner sep=1pt] (E) at (378:1cm) {};
\node [circle,fill=black,inner sep=1pt] (F) at (90:2cm) {};
\draw (A)--(B)--(C)--(D)--(E)--(A)--(F);
\draw (B)--(E);
\draw [dashed] (A)--(C)--(E)--(F)--(B)--(D)--(A);
\node  at (80:1.25cm) {$a$};
\node  at (162:1.25cm) {$b$};
\node  at (234:1.25cm) {$c$};
\node  at (306:1.25cm) {$d$};
\node  at (378:1.25cm) {$e$};
\node  at (90:2.25cm) {$f$};
\end{tikzpicture}
&&
\begin{tikzpicture}[scale=1]
\node [circle,fill=black,inner sep=1pt] (A) at (90:1cm) {};
\node [circle,fill=black,inner sep=1pt] (B) at (162:1cm) {};
\node [circle,fill=black,inner sep=1pt] (C) at (234:1cm) {};
\node [circle,fill=black,inner sep=1pt] (D) at (306:1cm) {};
\node [circle,fill=black,inner sep=1pt] (E) at (378:1cm) {};
\node [circle,fill=black,inner sep=1pt] (F) at (234:2cm) {};
\node [circle,fill=black,inner sep=1pt] (G) at (306:2cm) {};
\draw (F)--(C)--(D)--(E)--(A)--(B)--(C)--(D)--(G);
\draw (B)--(E);
\draw [dashed] (D)--(B)--(F)--(D)--(A)--(C)--(E)--(G)--(C);
\node  at (90:1.25cm) {$a$};
\node  at (162:1.25cm) {$b$};
\node  at (225:1.25cm) {$c$};
\node  at (315:1.25cm) {$d$};
\node  at (378:1.25cm) {$e$};
\node  at (234:2.25cm) {$f$};
\node  at (306:2.25cm) {$g$};
\end{tikzpicture}
&&
\begin{tikzpicture}[scale=1]
\node [circle,fill=black,inner sep=1pt] (A) at (90:1cm) {};
\node [circle,fill=black,inner sep=1pt] (B) at (162:1cm) {};
\node [circle,fill=black,inner sep=1pt] (C) at (234:1cm) {};
\node [circle,fill=black,inner sep=1pt] (D) at (306:1cm) {};
\node [circle,fill=black,inner sep=1pt] (E) at (378:1cm) {};
\node [circle,fill=black,inner sep=1pt] (F) at (234:2cm) {};
\node [circle,fill=black,inner sep=1pt] (G) at (306:2cm) {};
\draw (G)--(F)--(C)--(D)--(E)--(A)--(B)--(C)--(D);
\draw (B)--(E);
\draw [dashed] (E)--(C)--(A)--(D)--(B)--(F)--(D);
\draw [dashed] (C)--(G);
\node  at (90:1.25cm) {$a$};
\node  at (162:1.25cm) {$b$};
\node  at (225:1.25cm) {$c$};
\node  at (315:1.25cm) {$d$};
\node  at (378:1.25cm) {$e$};
\node  at (234:2.25cm) {$f$};
\node  at (306:2.25cm) {$g$};
\end{tikzpicture}\\
Figure 4.4.1&&Figure 4.4.2&&Figure 4.4.3
\end{tabular}
\end{center}
\begin{lemma}\label{disjoint-triangles:C7}
The graph $\G$ does not have any heptagon.
\end{lemma}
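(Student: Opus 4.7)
The plan is to assume, for contradiction, that $\G$ contains a heptagon $S=a_1a_2\cdots a_7a_1$ and to mimic the strategy of Lemmas \ref{disjoint-triangles:C5} and \ref{disjoint-triangles:C6}. First I would note that $S$ is induced: a chord at $S$-distance $2$ creates a hexagon and a chord at $S$-distance $3$ creates a pentagon, both forbidden by Lemmas \ref{disjoint-triangles:C6} and \ref{disjoint-triangles:C5}. Since $\G$ is not an odd cycle, there exists $v\in V(\G)\setminus V(S)$ adjacent to some vertex of $S$; after relabelling take $v\sim a_1$. Next I would enumerate the admissible neighbourhoods $N_S(v)$. The bound $\Delta(\G)=3$ forces $|N_S(v)|\leq 3$; further, $v\sim a_4$ or $v\sim a_5$ produces a cycle of length $5$ or $6$, and any three-element attachment produces either two triangles sharing an edge or a forbidden short cycle. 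Up to the symmetry of $S$, the surviving possibilities are $N_S(v)\in\{\{a_1\},\{a_1,a_2\},\{a_1,a_3\}\}$.

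The case $N_S(v)=\{a_1,a_3\}$ falls quickly: in $\G_2$ the three vertices $a_2,a_4,a_7$ all lie at distance $2$ from $v$ through $S$, so $\deg_{\G_2}(v)\geq 3$, whence $N_{\G_2}(v)=\{a_2,a_4,a_7\}$ by $\Delta(\G_2)=3$. Since $a_2\sim a_4$ (via $a_3$) and $a_2\sim a_7$ (via $a_1$) in $\G_2$, the triangles $\{v,a_2,a_4\}$ and $\{v,a_2,a_7\}$ of $\G_2$ share the edge $\{v,a_2\}$, contradicting the disjoint-triangles hypothesis applied to $\G_2\cong\G$.

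For $N_S(v)=\{a_1,a_2\}$, the triangle $\{v,a_1,a_2\}$ is the only triangle at $v$, and any third neighbour $w$ of $v$ must lie outside $S$; the forbidden pentagons, hexagons, and intersecting triangles then force $N_S(w)\subseteq\{a_3\}$ or $N_S(w)\subseteq\{a_7\}$, never both. I would then work through each sub-configuration (no third neighbour of $v$; $w$ attached only to $v$; $w$ attached also to $a_3$ or $a_7$; possible further external vertices) and in each exhibit either a vertex of degree four in $\G_2$, a pentagon or hexagon in $\G_2$, two triangles in $\G_2$ sharing a vertex, or a mismatch in one of $|E|$, $\nabla$, or the degree multiset between $\G$ and $\G_2$---each contradicting $\G\cong\G_2$. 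The case $N_S(v)=\{a_1\}$ is analogous but ramifies further, because additional vertices of $V(\G)\setminus V(S)$ may attach to $a_2,\ldots,a_7$; each such external vertex is again severely restricted by $\Delta=3$, the absence of pentagons and hexagons, and the disjoint-triangles hypothesis, so the admissible attachment patterns are few, and every configuration still yields one of the four contradictions listed above.

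The principal obstacle is in these last two cases, where the sub-configurations are substantially more varied than for $C_5$ or $C_6$: the triangle at $v$ in the $\{a_1,a_2\}$ case, and the possibility of several external vertices decorating $S$ in the $\{a_1\}$ case, demand careful bookkeeping to guarantee exhaustiveness, and in each sub-configuration one must pinpoint the correct forbidden object in $\G$ or $\G_2$ rather than rely on a single uniform argument.
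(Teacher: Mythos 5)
There is a genuine gap: the two cases that carry essentially all the difficulty, $N_S(v)=\{a_1,a_2\}$ and $N_S(v)=\{a_1\}$, are not actually proved. For them you only list the kinds of contradictions you expect to find and acknowledge that the ``sub-configurations are substantially more varied'' and ``demand careful bookkeeping to guarantee exhaustiveness.'' That bookkeeping is the proof; as written, the proposal establishes the induced-ness of $S$, the classification of $N_S(v)$ up to symmetry, and the case $\{a_1,a_3\}$ (which is handled correctly), but stops short of the main content.

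You are also missing the two observations that make the paper's argument short and spare you the open-ended case analysis you anticipate. First, since the $2$-distance graph of an induced $C_7$ is again a $C_7$ (on the same vertices, reordered), a vertex $v$ with $N_S(v)=\{a_1\}$ is adjacent in $\G_2$ to $a_2$ and $a_7$, which are \emph{consecutive} on the heptagon $S_2$; replacing $\G$ by $\G_2$ therefore folds the single-attachment case into the consecutive-pair case, so only $N_S(v)=\{a_1,a_2\}$ needs to be treated. Second, in that remaining case no further external vertices need to be examined at all: with $N_S(h)=\{d,e\}$ exactly (forced by Lemmas \ref{disjoint-triangles:C5} and \ref{disjoint-triangles:C6}), the subgraph of $\G_2$ induced by the eight vertices already contains the heptagon $a,c,e,g,b,d,f$ together with $h$ adjacent to $c$ and $f$, and passing once more to $(\G_2)_2$ produces the triangles $\{a,e,h\}$ and $\{a,d,h\}$ sharing the edge $\{a,h\}$ --- a contradiction that lives entirely inside this fixed configuration. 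Without these reductions your plan, even if completable in principle, does not yet constitute a proof.
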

\begin{proof}
Suppose on the contrary that $\G$ has a heptagon $S$ with vertices $a,b,c,d,e,f,g$. By Lemmas \ref{disjoint-triangles:C5} and \ref{disjoint-triangles:C6}, $S$ is an induce subgraph. Since $\G$ is not an odd cycle, there is a new vertex $h$ adjance to some vertex of $S$. A simple verification shows that $h$ is adjacent to two consecutive vertices of $S$ in $\G$ or $\G_2$. Hence, we may assume that $h$ is adjacent to vertices $d$ and $e$ of $S$ in $\G$. By Lemmas \ref{disjoint-triangles:C6} and \ref{disjoint-triangles:C5}, one gets $N_S(h)=\{d,e\}$. By the same reasons, one can easily show that the subgraph of $\G_2$ induced by the vertices $a,b,c,d,e,f,g,h$ is as drawn in Figure 4.5.1 with dashed lines. But then $(\G_2)_2$ has two triangles $\{a,e,h\}$ and $\{a,d,h\}$ with a common edge, which is a contradiction.
\end{proof}
\begin{center}
\begin{tikzpicture}[scale=1]
\node [circle,fill=black,inner sep=1pt] (A) at (90+360/7*0:1cm) {};
\node [circle,fill=black,inner sep=1pt] (B) at (90+360/7*1:1cm) {};
\node [circle,fill=black,inner sep=1pt] (C) at (90+360/7*2:1cm) {};
\node [circle,fill=black,inner sep=1pt] (D) at (90+360/7*3:1cm) {};
\node [circle,fill=black,inner sep=1pt] (E) at (90+360/7*4:1cm) {};
\node [circle,fill=black,inner sep=1pt] (F) at (90+360/7*5:1cm) {};
\node [circle,fill=black,inner sep=1pt] (G) at (90+360/7*6:1cm) {};
\node [circle,fill=black,inner sep=1pt] (H) at (270:1.8019cm) {};
\draw (A)--(B)--(C)--(D)--(E)--(F)--(G)--(A);
\draw [dashed] (A)--(C)--(E)--(G)--(B)--(D)--(F)--(A);
\draw (D)--(H)--(E);
\draw [dashed] (C) to [out=270,in=180] (H);
\draw [dashed] (F) to [out=270,in=0] (H);
\node  at (90+360/7*0:1.25cm) {$a$};
\node  at (90+360/7*1:1.25cm) {$b$};
\node  at (90+360/7*2:1.25cm) {$c$};
\node  at (90+360/7*3:1.25cm) {$d$};
\node  at (90+360/7*4:1.25cm) {$e$};
\node  at (90+360/7*5:1.25cm) {$f$};
\node  at (90+360/7*6:1.25cm) {$g$};
\node  at (270:2.0cm) {$h$};
\end{tikzpicture}\\
Figure 4.5.1
\end{center}
\begin{lemma}\label{disjoint-triangles:C8}
The graph $\G$ does not have any octagon.
\end{lemma}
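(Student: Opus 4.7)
The plan is to mirror the strategy of Lemmas \ref{disjoint-triangles:C5}--\ref{disjoint-triangles:C7}. Suppose $\G$ contains an octagon $S = a_1 a_2 \cdots a_8$. First, $S$ is induced: any chord $\{a_i, a_j\}$ splits $S$ into two cycles whose lengths sum to $10$, so one of them has length in $\{5,6,7\}$, contradicting Lemma \ref{disjoint-triangles:C5}, \ref{disjoint-triangles:C6}, or \ref{disjoint-triangles:C7}. Since $\Delta(\G) = 3$, $\G \ne S$, so there is $v \notin S$ with $v \sim a_1$. If $v \sim a_j$ for some $j \ne 1$, then $\G$ acquires cycles of lengths $j + 1$ and $11 - j$; forbidden-cycle lengths exclude $j \in \{4, 5, 6\}$, and the disjoint-triangle hypothesis rules out three-$S$-neighbor configurations (which would produce two triangles sharing an edge through $v$). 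Using the reflection symmetry of $S$ about $a_1$, the remaining possibilities for $N_S(v)$ reduce to
\begin{itemize}
\item[(I)] $\{a_1, a_3\}$,
\item[(II)] $\{a_1, a_2\}$,
\item[(III)] $\{a_1\}$.
\end{itemize}

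In Case (I) the contradiction is immediate: the $\G$-paths $v a_1 a_2$, $v a_3 a_4$, $v a_1 a_8$, $a_2 a_3 a_4$, and $a_2 a_1 a_8$ of length two show that in $\G_2$ the vertex $v$ is adjacent to $a_2, a_4, a_8$, and the edges $a_2 a_4$ and $a_2 a_8$ also lie in $\G_2$. Hence $\{v, a_2, a_4\}$ and $\{v, a_2, a_8\}$ are two triangles of $\G_2$ sharing the edge $\{v, a_2\}$, contradicting the fact that the isomorphism $\G_2 \cong \G$ carries the disjoint-triangle property of $\G$ to $\G_2$.

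Cases (II) and (III) are treated analogously, by computing the induced subgraph of $\G_2$ on $S \cup \{v\}$ and, if necessary, passing to $(\G_2)_2 \cong \G$ in order to exhibit two triangles sharing a vertex or edge. The principal obstacle is Case (III), in which $v$ may have up to two further neighbors outside $S$. To handle it, one first observes that $a_1 \sim_{\G_2} a_3, a_7$ automatically, and that every additional $\G$-neighbor of $v$, $a_2$, or $a_8$ contributes a further $\G_2$-neighbor of $a_1$; the bound $\Delta(\G_2) = 3$ thus forces $\deg_\G(v) \le 2$ and tightly restricts the degrees of $a_2$ and $a_8$. In each of the few remaining configurations, the placement of $v$'s second neighbor either creates a forbidden short cycle in $\G$ (handled via Lemmas \ref{disjoint-triangles:C5}--\ref{disjoint-triangles:C7}) or yields a triangle of $\G_2$ incident to one of $v, a_2, a_8$ besides $\{v, a_2, a_8\}$, again violating the disjoint-triangle property. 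Case (II) is similar, but here one uses the fact that the triangle $\{v, a_1, a_2\}$ in $\G$ produces an induced claw centered at $a_3$ in $\G_2$, which in turn yields a triangle in $(\G_2)_2 \cong \G$ that clashes with $\{v, a_1, a_2\}$ at a shared edge.
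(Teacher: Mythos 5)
Your reduction to the three cases (I)--(III) is sound, and your treatment of Case (I) is correct: the triangles $\{v,a_2,a_4\}$ and $\{v,a_2,a_8\}$ of $\G_2$ do share the edge $\{v,a_2\}$, which is a valid contradiction. But the proof stops being a proof exactly where the real work is. Case (II) --- $v$ adjacent to two consecutive vertices --- is the paper's main (indeed only nontrivial) case, and your sketch of it contains a genuine error: you propose to contradict the disjoint-triangle hypothesis by exhibiting a triangle of $(\G_2)_2$ (coming from the induced claw at $a_3$ in $\G_2$) that ``clashes at a shared edge'' with the triangle $\{v,a_1,a_2\}$. But $\{v,a_1,a_2\}$ is a triangle of $\G$, not of $(\G_2)_2$; these are different graphs on the same vertex set, and the hypothesis only forbids two triangles of the \emph{same} graph from meeting. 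In fact one can check that $a_1$ and $a_2$ need not even be adjacent in $(\G_2)_2$. The correct finish for Case (II), which is what the paper does, is a degree count: from $v\sim_{\G_2} a_3,a_8$ one gets $v$ adjacent to $a_1,a_5,a_2,a_6$ in $(\G_2)_2$, so $\deg_{(\G_2)_2}(v)\geq 4$, contradicting Lemma \ref{disjoint-triangles:Delta}. (Alternatively, your two induced claws at $a_3$ and $a_8$ in $\G_2$ give triangles $\{a_1,a_5,v\}$ and $\{a_2,a_6,v\}$ of $(\G_2)_2$ meeting at $v$ --- but then both triangles must live in $(\G_2)_2$, which is not what you wrote.)

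Case (III) is also left unfinished, and you present it as the hard case when it is actually immediate: if $N_S(v)=\{a_1\}$ then $v\sim_{\G_2}a_2,a_8$, and together with the $4$-cycle $a_2a_4a_6a_8$ of $\G_2$ this yields the pentagon $v,a_2,a_4,a_6,a_8$ in $\G_2$, contradicting Lemma \ref{disjoint-triangles:C5}. This is precisely how the paper disposes of all non-consecutive attachments in one line (your Case (I) also produces such a pentagon, namely $v,a_2,a_4,a_6,a_8$ again), leaving only the consecutive case to be handled by the degree argument above. So the overall architecture of your proof matches the paper's, but as written the decisive step is missing in Case (II) and merely asserted in Case (III); you should replace the ``clash'' argument by the $\deg_{(\G_2)_2}(v)\geq4$ computation and note the pentagon shortcut for the other cases.
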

\begin{proof}
Suppose on the contrary that $\G$ has an octagon $S$ with $a,b,c,d,e,f,g,h$ as its vertices. By Lemmas \ref{disjoint-triangles:C5}, \ref{disjoint-triangles:C6} and \ref{disjoint-triangles:C7}, $S$ is an induced subgraph of $\G$. Since $\G$ is not an even cycle, there is a new vertex $i$ adjacent to some vertex of $S$. Clearly, $i$ is adjacent to two consecutive vertices of $S$ for otherwise we have a pentagon in $\G_2$ contradicting Lemma \ref{disjoint-triangles:C5}. Hence, we may assume that $i$ is adjacent to vertices $d$ and $e$ of $S$. Now, by Lemma \ref{disjoint-triangles:C5}, one can easily show that the subgraph of $\G_2$ induced by the vertices $a,b,c,d,e,f,g,h,i$ is as drawn in Figure 4.6.1. But then $i$ is adjacent to vertices $a,d,e,h$ in $(\G_2)_2$ contradicting Lemma \ref{disjoint-triangles:Delta}. The proof is complete.
\end{proof}
\begin{center}
\begin{tikzpicture}[scale=1]
\node [circle,fill=black,inner sep=1pt] (A) at (112.5+45*0:1cm) {};
\node [circle,fill=black,inner sep=1pt] (B) at (112.5+45*1:1cm) {};
\node [circle,fill=black,inner sep=1pt] (C) at (112.5+45*2:1cm) {};
\node [circle,fill=black,inner sep=1pt] (D) at (112.5+45*3:1cm) {};
\node [circle,fill=black,inner sep=1pt] (E) at (112.5+45*4:1cm) {};
\node [circle,fill=black,inner sep=1pt] (F) at (112.5+45*5:1cm) {};
\node [circle,fill=black,inner sep=1pt] (G) at (112.5+45*6:1cm) {};
\node [circle,fill=black,inner sep=1pt] (H) at (112.5+45*7:1cm) {};
\node [circle,fill=black,inner sep=1pt] (I) at (270:1.8477cm) {};
\draw (A)--(B)--(C)--(D)--(E)--(F)--(G)--(H)--(A);
\draw [dashed] (A)--(C)--(E)--(G)--(A);
\draw [dashed] (B)--(D)--(F)--(H)--(B);
\draw (D)--(I)--(E);
\draw [dashed] (C) to [out=270,in=180] (I);
\draw [dashed] (F) to [out=270,in=0] (I);
\node  at (112.5+45*0:1.25cm) {$a$};
\node  at (112.5+45*1:1.25cm) {$b$};
\node  at (112.5+45*2:1.25cm) {$c$};
\node  at (112.5+45*3:1.25cm) {$d$};
\node  at (112.5+45*4:1.25cm) {$e$};
\node  at (112.5+45*5:1.25cm) {$f$};
\node  at (112.5+45*6:1.25cm) {$g$};
\node  at (112.5+45*7:1.25cm) {$h$};
\node  at (270:2.1cm) {$i$};
\end{tikzpicture}\\
Figure 4.6.1
\end{center}
\begin{theorem}\label{disjoint-triangles}
Let $\G$ be a self $2$-distance graph with disjoint triangles. Then either $\G$ is an odd cycle or it is the edged product $C_5|C_3$.
\end{theorem}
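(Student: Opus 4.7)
The plan is to reduce the theorem to Theorem~\ref{C4free}: once $\G$ is shown to contain no $4$-cycle, that theorem immediately yields the desired conclusion. First I would dispose of the case that $\G$ has a $C_5|C_3$ subgraph, which by Lemma~\ref{disjoint-triangles:C5|C3} already forces $\G\cong C_5|C_3$; henceforth we may assume no such subgraph exists.

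Next I would extend Lemmas~\ref{disjoint-triangles:C5}, \ref{disjoint-triangles:C6}, \ref{disjoint-triangles:C7}, and \ref{disjoint-triangles:C8} to cycles of all lengths $\geq 5$ by induction on the length, following the template of Lemma~\ref{C4-free:Cn}. Given an induced $(n+1)$-cycle $C$ in $\G$ with $n\geq 8$, pick $v\in V(\G)\setminus V(C)$ adjacent to some $a\in V(C)$ (such $v$ exists because $\G$ is not a cycle) and analyse $(V(C)\cup\{v\})_2$. When $n+1$ is even, $C$ alone already contributes two $\tfrac{n+1}{2}$-cycles to $\G_2$, one of forbidden length in $\G_2\cong\G$. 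When $n+1$ is odd, splitting on whether $v$ is adjacent to one of $a$'s two neighbours on $C$ produces a shorter cycle either in $\G$ directly or in $\G_2$, contradicting the inductive hypothesis or a previous lemma.

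The step I expect to be the main obstacle is ruling out $4$-cycles, since the preceding lemmas of this section do not address them. Suppose $C=abcd$ is a $4$-cycle in $\G$. The disjoint-triangle hypothesis prevents chords of $C$ (a chord would produce two triangles sharing an edge), so $ac,bd\in E(\G_2)$ while $ab,bc,cd,da\notin E(\G_2)$. By $\Delta(\G)=3$, each of $a,b,c,d$ has at most one neighbour outside $C$; let $a',b',c',d'$ denote these external neighbours when they exist. Writing out the edges these external neighbours contribute to $\G_2$, and using that $\G_2\cong\G$ must itself have disjoint triangles and maximum degree three, a case analysis on which external neighbours exist (and on their own degrees) yields in each case a contradiction of one of the following forms: a cycle of forbidden length $4$ or $\geq 5$ in $\G_2$, two triangles of $\G_2$ sharing a vertex (for instance $\{a,c,b'\}$ and $\{a,c,d'\}$ when all four external neighbours have degree one), or a vertex of $\G_2$ of degree at least four.

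With $4$-cycles now excluded, $\G$ is $4$-cycle-free, so Theorem~\ref{C4free} applies directly and yields that $\G$ is either an odd cycle of length $\geq 5$ or the edged product $C_5|C_3$, completing the proof.
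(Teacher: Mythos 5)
Your overall skeleton agrees with the paper's: reduce to Theorem~\ref{C4free} by showing $\G$ has no $4$-cycle. But the paper disposes of the crux in one move that your plan misses: it asks where a square in $\G_2$ could come from, rather than what a square in $\G$ would force. Given $\Delta(\G)=3$ and disjoint triangles, the four common neighbours realizing the edges of a square $wxyz$ of $\G_2$ trace a closed walk of length at most $8$ in $\G$, and the only surviving configurations are a hexagon or an octagon (coincidences among the common neighbours either create a degree-$4$ vertex or two triangles sharing an edge in $\G$ or $\G_2$). Since Lemmas~\ref{disjoint-triangles:C6} and~\ref{disjoint-triangles:C8} already forbid hexagons and octagons, $\G_2$ --- hence $\G$ --- is square-free, and Theorem~\ref{C4free} applies. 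This needs neither your extension of the cycle lemmas to all lengths $\geq 5$ nor any direct analysis of a square in $\G$.

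Your substitute for that step has two genuine gaps. First, the induction extending the cycle lemmas ``following the template of Lemma~\ref{C4-free:Cn}'' is circular as ordered: that template repeatedly invokes $4$-cycle-freeness (e.g.\ to conclude that a chord-like adjacency yields a forbidden cycle of length $l$ with $4\leq l\leq n$, and to rule out $v$ adjacent to both $b$ and $c$), but you only exclude $4$-cycles afterwards, and your $4$-cycle exclusion in turn cites ``cycles of forbidden length $\geq 5$''. In particular, if $v$ is adjacent to $a$ and to the vertex at distance two from $a$ along $C$, you obtain only a $4$-cycle together with another $(n+1)$-cycle --- no contradiction is available at that stage. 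Second, the local case analysis around a square $abcd$ of $\G$ does not always terminate in one of your three listed contradictions. Take $\G$ containing an induced $C_4$ with a single pendant path attached at $a$: in $\G_2$ the vertices $a,b,c,d$ and the first pendant vertex span only a triangle $\{a',b,d\}$ together with a disjoint short path through $a$ and $c$ --- no vertex of degree four, no forbidden cycle, no two triangles meeting. The contradiction in such cases is global (disconnectedness of $\G_2$, or a component count, or structure arbitrarily far from the square), so the analysis cannot be confined to ``which external neighbours exist and their degrees''. The strategy is salvageable, but as written the decisive step is not closed; the paper's pullback argument is the efficient way to close it.
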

\begin{proof}
A simple verification shows that squares in $\G_2$ arises from hexagons or octagons. Hence, by Lemmas \ref{disjoint-triangles:C6} and \ref{disjoint-triangles:C8}, $\G$ has no squares and the result follows by Theorem \ref{C4free}.
\end{proof}
\begin{corollary}\label{cubic}
There is no cubic self $2$-distance graph.
\end{corollary}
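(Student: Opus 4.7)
The plan is to argue by contradiction. Suppose $\G$ is a cubic self $2$-distance graph. Since odd cycles are $2$-regular and $C_5|C_3$ has four vertices of degree $2$, neither of the two graphs identified in Theorem \ref{disjoint-triangles} is cubic. Hence, by the contrapositive of that theorem, $\G$ must contain two triangles sharing at least one vertex.

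I would then analyse the local structure at such a shared vertex $v$. Cubicity forces the two triangles to share an edge, since otherwise the four other triangle vertices would all be neighbors of $v$, contradicting $\deg_\G(v)=3$. So we may write $N_\G(v)=\{a,b,c\}$ with both $ab$ and $ac$ edges of $\G$. Since $a$ already has three neighbors $v,b,c$, the entire neighborhood of $a$ is $N_\G(a)=\{v,b,c\}$, and the subgraph induced on $\{v,a,b,c\}$ is a diamond with $a$ having no outside edges.

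The decisive step is to count the neighbors of $v$ in $\G_2$. Such a vertex is at distance exactly two from $v$ in $\G$, so it must be adjacent to one of $a,b,c$ but must not lie in $\{v,a,b,c\}$. Since every neighbor of $a$ lies in $\{v,b,c\}$, the vertex $a$ contributes nothing; each of $b$ and $c$ has at most one further neighbor outside $\{v,a\}$, so together they contribute at most two distance-$2$ vertices. Hence $\deg_{\G_2}(v)\leq 2$, contradicting the hypothesis that $\G\cong\G_2$ is $3$-regular.

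I do not anticipate a serious obstacle; the only point requiring care is that the ``third neighbors'' of $b$ and $c$ might coincide with each other or fall back into $\{b,c\}$, but in any such collapse the count of distance-$2$ vertices only drops further, so the contradiction is preserved.
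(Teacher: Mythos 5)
Your proposal is correct and follows essentially the same route as the paper: Theorem \ref{disjoint-triangles} forces two triangles sharing an edge (a diamond), and then a shared-edge vertex of that diamond has at most two vertices at distance two, so its degree in $\G_2$ is at most $2$, contradicting cubicity. Your explicit handling of the degenerate cases (third neighbors of $b$ and $c$ coinciding or equal to each other's, which subsumes the $K_4$ case the paper excludes separately) is a welcome bit of extra care, but the argument is the same.
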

\begin{proof}
By Theorem \ref{disjoint-triangles}, and the fact that $\G$ is not the complete graph on four vertices, it follows that $\G$ has an induced subgraph as in Figure 4.8.1. Then $\deg_{\G_2}(u)$ is two, which is a contradiction.
\end{proof}
\begin{center}
\begin{tikzpicture}
\node [circle,fill=black,inner sep=1pt] (A) at (0,0) {};
\node [circle,fill=black,inner sep=1pt] (B) at (1,0) {};
\node [circle,fill=black,inner sep=1pt,label=above:\tiny{$u$}] (C) at ({1+cos(45)},{sin(45)}) {};
\node [circle,fill=black,inner sep=1pt] (D) at ({1+cos(45)},{-sin(45)}) {};
\node [circle,fill=black,inner sep=1pt] (E) at ({1+2*cos(45)},0) {};
\node [circle,fill=black,inner sep=1pt] (F) at ({2+2*cos(45)},0) {};

\draw (A)--(B);
\draw (E)--(F);
\draw (C)--(D)--(B)--(C)--(E)--(D);
\end{tikzpicture}\\
Figure 4.8.1
\end{center}
\section{Graphs with no diamond subgraphs}
In this section, we go further into the study of self $2$-distance graphs with a forbidden subgraph, which relies on our earlier results. Remind that a diamond is the edged product of two triangles, namely $C_3|C_3$. A diamond with vertices $a,b$ of degree three and vertices $c,d$ of degree two is denoted by $\D(a,b;c,d)$.
\begin{theorem}\label{diamond-free}
Let $\G$ be a self $2$-distance graph with no diamond as subgraph. Then either $\G$ is an odd cycle, it is the edged product $C_5|C_3$, or it is isomorphic to one the following graphs:
\begin{center}
\begin{tabular}{ccc}
\begin{tikzpicture}[scale=0.75]
\node [circle,fill=black,inner sep=1pt] (A) at (2,2) {};
\node [circle,fill=black,inner sep=1pt] (B) at (2,-2) {};
\node [circle,fill=black,inner sep=1pt] (C) at (-2,-2) {};
\node [circle,fill=black,inner sep=1pt] (D) at (-2,2) {};
\node [circle,fill=black,inner sep=1pt] (E) at (1,0) {};
\node [circle,fill=black,inner sep=1pt] (F) at (0,-1) {};
\node [circle,fill=black,inner sep=1pt] (G) at (-1,0) {};
\node [circle,fill=black,inner sep=1pt] (H) at (0,1) {};
\draw (A)--(B)--(C)--(D)--(A)--(E)--(B)--(F)--(C)--(G)--(D)--(H)--(A);
\draw (E)--(G);
\draw (F)--(H);
\end{tikzpicture}
&\ \hspace{1cm}\ &
\begin{tikzpicture}[scale=0.75]
\node [circle,fill=black,inner sep=1pt] (A) at (2,2) {};
\node [circle,fill=black,inner sep=1pt] (B) at (2,-2) {};
\node [circle,fill=black,inner sep=1pt] (C) at (-2,-2) {};
\node [circle,fill=black,inner sep=1pt] (D) at (-2,2) {};
\node [circle,fill=black,inner sep=1pt] (E) at (1,0) {};
\node [circle,fill=black,inner sep=1pt] (F) at (0,-1) {};
\node [circle,fill=black,inner sep=1pt] (G) at (-1,0) {};
\node [circle,fill=black,inner sep=1pt] (H) at (0,1) {};
\node [circle,fill=black,inner sep=1pt] (O) at (0,0) {};
\draw (A)--(B)--(C)--(D)--(A)--(E)--(B)--(F)--(C)--(G)--(D)--(H)--(A);
\draw (E)--(G);
\draw (F)--(H);
\draw (E) to [out=135,in=45] (G);
\draw (F) to [out=135,in=225] (H);
\end{tikzpicture}\\
Figure 5.1.1&&Figure 5.1.2
\end{tabular}
\end{center}
\end{theorem}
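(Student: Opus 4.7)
The plan is to split the argument into two cases according to whether the triangles of $\G$ are pairwise vertex-disjoint. In the vertex-disjoint case, Theorem \ref{disjoint-triangles} applies directly and yields that $\G$ is an odd cycle or the edged product $C_5|C_3$, which handles that subcase immediately.

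In the remaining case, some two triangles of $\G$ share a vertex. By the no-diamond hypothesis any such pair shares exactly one vertex, giving a bowtie subgraph on five vertices with apex $v$ and petals $\{v,a,b\}$ and $\{v,c,d\}$. A direct distance computation shows that in $\G_2$ the four vertices $a,b,c,d$ span the $4$-cycle $a$-$c$-$b$-$d$-$a$: each of $ac,ad,bc,bd$ is a distance-$2$ pair in $\G$ (any of these being an edge would produce a diamond together with one of the two petals), while $ab$ and $cd$ are edges of $\G$. Pulling this back via $\G\cong\G_2$, one sees that $\G$ itself contains a $4$-cycle, and moreover $\Delta(\G)\ge 4$, in contrast with Sections 3 and 4 where $\Delta(\G)=3$.

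From there I would pin down the local structure at $v$: a third triangle through $v$ would either share an edge with an existing petal (forbidden) or inflate the image $\deg_{\G_2}(v)$ past the degree of any admissible target, so $\deg_\G(v)=4$ exactly and the two petals are everything meeting $v$. Transporting this rigidity along the isomorphism $\G\cong\G_2$, each of $a,b,c,d$ must itself be a bowtie apex of the same local type, giving a cascading "ring of bowties". A counting argument in the spirit of Lemma \ref{linegraphdistancegraph}, comparing $|E(\G)|$, $|E(\G_2)|$, $\nabla(\G)$ and the number of induced $4$-cycles, then closes the cascade after four apices, producing either the rigid $8$-vertex graph of Figure 5.1.1 or, when a central vertex is required to balance the degree sequence, the $9$-vertex variant of Figure 5.1.2.

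The main obstacle is the rigidity step: showing that the bowtie cascade cannot extend indefinitely, cannot close up on fewer than four apices, and cannot accept alternative attachments between petals. Because the no-diamond hypothesis is substantially weaker than either the no-$4$-cycle condition of Theorem \ref{C4free} or the disjoint-triangles condition of Theorem \ref{disjoint-triangles}, many local configurations that were excluded in those sections survive here, and each must be ruled out individually via the $\G\cong\G_2$ constraint and the no-diamond assumption. This careful enumeration, rather than any single clever observation, is where the bulk of the work lies.
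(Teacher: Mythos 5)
Your opening reduction agrees with the paper: if all triangles are vertex-disjoint, Theorem \ref{disjoint-triangles} finishes, and otherwise two triangles share exactly one vertex $v$, giving a bowtie; your observation that $\{a,c,b,d\}$ spans a $4$-cycle in $\G_2$ is correct. But from that point on there are genuine gaps, not just deferred bookkeeping. First, your argument that $\deg_\G(v)=4$ ``exactly'' only addresses a third triangle through $v$; it does not rule out extra neighbors of $v$ that lie in no triangle. The paper's argument is different and actually closes this: $N_\G(v)$ induces a disjoint union of edges and isolated vertices (no diamond), while $N_\G(v)^c$ sits inside $\G_2$ and must also be diamond-free, which forces $|N_\G(v)|\le 4$ and, at a degree-$4$ vertex, forces $N_\G(v)$ to be exactly two disjoint edges. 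Second, your rigidity claim that ``each of $a,b,c,d$ must itself be a bowtie apex of the same local type'' is false: in Figure 5.1.1 the two petal vertices off the outer square (your $b$ and $d$, say) have degree $3$ and lie in only one triangle each. The isomorphism $\G\cong\G_2$ carries apices of $\G$ to apices of $\G_2$, not of $\G$, so ``transporting the rigidity'' does not yield this claim; pursuing it would wrongly exclude the actual answer graphs. Third, the proposed counting step via Lemma \ref{linegraphdistancegraph} is not usable here in its clean form, since that form requires $\G$ to be $C_4$-free and you have just shown $\G$ contains $4$-cycles; the general form involves $\sum_{u\not\sim v}|N_\G(u)\cap N_\G(v)|$, which you have no handle on.

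The paper's actual engine is quite different from a cascade-plus-count: writing $N_\G(v)=\{a,b\}\cup\{c,d\}$ as two disjoint edges, it introduces $M_\G(v)$, the set of vertices outside $v$ adjacent to one of $\{a,b\}$ and one of $\{c,d\}$, picks $v$ of degree $4$ maximizing $|M_\G(v)|$, and runs a case analysis over $|M_\G(v)|=4,3,2,1,0$ (preceded by two structural sublemmas about vertices of $M_\G(v)$). The cases $|M_\G(v)|=4$ and $3$ produce Figures 5.1.2 and 5.1.1 respectively, and the remaining cases are eliminated by diamond-in-$\G_2$ and maximality contradictions. Since your proposal both defers this entire classification and routes it through a false intermediate claim, it does not constitute a proof.
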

\begin{proof}
First we observe that $\Delta(\G)\leq4$. Indeed, if $v\in V(\G)$ is an arbitrary vertex, then by assumption the subgraph induced by $N_\G(v)$ is a union of disjoint edges and isolated vertices. On the other hand, $N_\G(v)^c$ is a subgraph of $\G_2$, from which it follows that $|N_\G(v)|\leq4$. If $\Delta(\G)\leq3$, then all triangles in $\G$ are disjoint and the result follows by Theorem \ref{disjoint-triangles}. Hence, in what follows, we assume that $\Delta(\G)=4$ and that $v\in V(\G)$ is a vertex of degree four. Clearly, $N_\G(v)$ is a union of two disjoint edges, say $\{a,b\}$ and $\{c,d\}$, and that $N_\G(a)\cap N_\G(b)=N_\G(c)\cap N_\G(d)=\{v\}$. Let $X=\{a,b\}$, $Y=\{c,d\}$ and $M_\G(v)$ be the set of all vertices of $\G\setminus\{v\}$ adjacent to an element of $X$ and an element of $Y$. Suppose further that $|M_\G(v)|$ is maximum among all vertices of degree four. We proceed in some steps:

Case 1. If $e,f\in M_\G(v)$, then $N_{N_\G(v)}(e)\neq N_{N_\G(v)}(f)$. If not $(N_\G(v)\cup\{e,f\})\setminus N_{N_\G(v)}(e)$ has a diamond subgraph in $\G_2$, which is a contradiction.

Case 2. If $e,f\in M_\G(v)$, then $N_\G(e),N_\G(f)\subseteq N_\G(v)$. If not we may assume that $e$ is adjacent to a new vertex $g$. First assume that $N_{N_\G(v)}(e)\cap N_{N_\G(v)}(f)=\emptyset$ and we may assume that $a,c\in N_\G(e)$ and $b,d\in N_\G(f)$. Then we have a diamond $\D(e,g;a,c)$ in $\G$ or a diamond $\D(a,c;f,g)$ in $\G_2$ according to $g$ is adjacent simultaneously to $a$ and $c$ or not, which is a contradiction. Hence $g$ is adjacent to exactly one of $a$ or $c$, say $a$. Hence $g$ is not adjacent to $d$ by assumption, from which it follows that $g$ and $f$ are not adjacent for otherwise we get a diamond $\D(b,d;e,g)$ in $\G_2$. Now, by replacing $\G,v,a,b,c,d,e,f,g$ by $\G_2,b,c,g,d,e,a,v,f$, we observe that $g$ and $f$ are adjacent, which is impossible. Thus $N_{N_\G(v)}(e)\cap N_{N_\G(v)}(f)\neq\emptyset$. Assume that $a,c\in N_\G(e)$ and $a,d\in N_\G(f)$. Then $\deg_\G(a)=4$, which implies that $e$ and $f$ are adjacent. Hence $\deg_\G(e)=4$ so that $c$ and $g$ are adjacent too. Now, by replacing $\G,v,a,b,c,d,e,f$ by $\G_2,b,e,d,f,c,v,a$, respectively, we observe that $N_{N_\G(v)}(e)\cap N_{N_\G(v)}(f)=\emptyset$, which is impossible as mentioned before.

Case 3. $|M_\G(v)|=4$. We may assume that $N_\G(a)\cap N_\G(c)=\{v,e\}$, $N_\G(b)\cap N_\G(d)=\{v,f\}$, $N_\G(b)\cap N_\G(c)=\{v,g\}$ and $N_\G(a)\cap N_\G(d)=\{v,h\}$ for some distinct vertices $e,f,g,h$ different from $v,a,b,c,d$. As $\deg_\G(a)=\deg_\G(b)=\deg_\G(c)=\deg_\G(d)=4$, the subgraph induced by $e,f,g,h$ is the $4$-cycle $\{e,g,f,h,e\}$. Hence, by using case 2, the graph $\G$ is isomorphic to that drawn in Figure 5.1.2.

Case 4. $|M_\G(v)|=3$. We may assume that $N_\G(a)\cap N_\G(c)=\{v,e\}$, $N_\G(b)\cap N_\G(d)=\{v,f\}$, $N_\G(b)\cap N_\G(c)=\{v,g\}$ for some distinct vertices $e,f,g$ different from $v,a,b,c,d$. Since $\deg_\G(b)=\deg_\G(c)=4$, $g$ is adjacent to $e$ and $f$. But then $e$ and $f$ are not adjacent for otherwise we obtain a diamond $\D(e,g;c,f)$. If $\G$ has more than eight vertices, then there exists a new vertex $h$ adjacent to $a$ or $d$, say $a$. Since $\deg_\G(a)=4$, $e$ and $h$ must be adjacent. Then $h$ is adjacent to $b,c,v$ in $\G_2$ so that $|M_{\G_2}(v)|=4$. Hence, by case 3, $\G$ is isomorphic to the graph in Figure 5.1.2, which is a contradiction. Therefore, the only vertices of $\G$ are $v,a,b,c,d,e,f,g$ and $\G$ is isomorphic to the graph drawn in Figure 5.1.1.

Case 5. $|M_\G(v)|=2$. Then $M_\G(v)=\{e,f\}$ for some vertices $e$ and $f$. First assume that $N_{N_\G(v)}(e)\cap N_{N_\G(v)}(f)=\emptyset$, say $N_{N_\G(v)}(e)=\{a,c\}$ and $N_{N_\G(v)}(f)=\{b,d\}$. By case 2, there exists a new vertex $g$ adjacent to $a,b,c$ or $d$, say $a$. Then $\deg_\G(a)=4$, which implies that $g$ and $e$ are adjacent, contradicting case 2.
Thus $N_{N_\G(v)}(e)\cap N_{N_\G(v)}(f)\neq\emptyset$, say $N_{N_\G(v)}(e)=\{a,c\}$ and $N_{N_\G(v)}(f)=\{a,d\}$. Then $M_{\G_2}(b)=\{a,v\}$ and $N_{N_\G(b)}(a)\cap N_{N_\G(b)}(v)=\emptyset$, which is impossible by similar arguments as before.

Case 6. $|M_\G(v)|=1$. Suppose that $M_\G(v)=\{e\}$ and $N_{N_\G(v)}(e)=\{a,c\}$. First, we observe that neither $a$ nor $c$ is adjacent to a new vertex. If not we may assume that $a$ is adjacent to a new vertex $f$, from which it follows $e$ and $f$ must be adjacent. But then $a,v\in M_{\G_2}(b)$ contradicting the choice of $v$ as $\G_2\cong\G$. Now, if two new vertices $f$ and $g$ are adjacent to $b$ or $d$, say $b$, then $\deg_{\G_2}(a)=4$ while $b$ is an isolated vertex in $N_{\G_2}(a)=\{c,d,f,g\}$, which is a contradiction. Hence, we may assume that neither $b$ nor $d$ is adjacent to two vertices other than $v,a,c$. Next assume that $b$ and $d$ are adjacent to new vertices $f$ and $g$, respectively. If $f$ and $g$ are adjacent, then $a,v\in M_{\G_2}(b)$, contradicting the choice of $v$. Hence, assume that $f$ and $g$ are not adjacent and consequently $b$ and $d$ are not adjacent to $g$ and $f$ in $\G_2$, respectively. Also, $a$ and $g$ are not adjacent in $\G_2$ for otherwise $d$ and $f$ must be adjacent in $\G_2$, which is impossible. Now, it is easy to see that $(\G_2)_2$ has a diamond $\D(a,b;g,v)$, which is a contradiction. Hence, we may assume that at most one of $b$ and $d$ are adjacent to a new vertex. Suppose $b$ is such an element adjacent to a new vertex $f$. Then either we have a diamond $\D(c,d;f,v)$ in $(\G_2)_2$ when $c$ and $f$ are not adjacent in $\G_2$, or $e,f\in M_{(\G_2)_2}(v)$ when $c$ and $f$ are adjacent in $\G_2$, which is a contradiction. Therefore,  neither $b$ nor $d$ is adjacent to a new vertex other than $v,a,c$. Then, the second neighborhood of $v$ is consists of $e$ only. Since the subgraph induced by $a,b,c,d,e$ is not self $2$-distance graph, the vertex $e$ must be adjacent to some vertices other than $v,a,b,c,d$. If $e$ is adjacent to two new vertices $f,g$, then $\G_2$ has a diamond $\D(a,c;f,g)$, which is a contradiction. Hence, $N_\G(e)=\{a,c,f\}$ for some vertex $f$. As $b,d,v\in N_{\G_2}(e)$ and $\deg_{\G_2}(e)\leq4$, there must exists another vertex $g$ such that $N_\G(f)=\{e,g\}$. Then $N_{\G_2}(e)=\{b,d,v,g\}$ so that $v$ and $g$ must be adjacent in $\G_2$, which is impossible as $d_\G(v,g)=4$. 

Case 7. $M_\G(v)=\emptyset$. First suppose that three vertices among $a,b,c,d$ are adjacent to new vertices, say $a,b,c$ are adjacent to distinct vertices $e,f,g$, respectively. If $g$ is adjacent to $e$ or $f$, say $e$, then $N_{\G_2}(a)=\{c,d,f,g\}$ and hence $c$ and $f$ must be adjacent in $\G_2$, that is, $c$ and $f$ are connected in $\G$ via a path of length $2$. Clearly, $f$ and $g$ are not adjacent  for otherwise we have a diamond $\D(d,g;a,b)$ in $\G_2$, a contradiction. Hence, there exists a new vertex $h$ adjacent to both $c$ and $f$. Then $N_\G(c)=\{v,d,g,h\}$ so that $g$ and $h$ must be adjacent. But then $f$ and $g$ are adjacent in $\G_2$, which results in a diamond $\D(a,f;c,g)$ in $\G_2$, a contradiction. Thus, we deduce that there is no edges from $N_\G(a)\cup N_\G(b)$ to $N_\G(c)\cup N_\G(d)$, from which we obtain a diamond $\D(a,b;v,g)$ in $(\G_2)_2$, a contradiction. Next assume that exactly two vertices among $a,b,c,d$ are adjacent to vertices other than $v,a,b,c,d$. We have two cases up to symmetry:

(i) $a$ and $b$ are adjacent to two distinct new vertices $e$ and $f$, respectively. If $e$ or $f$, say $e$, is adjacent to another vertex $g$ in the third neighborhood of $v$, then $N_{\G_2}(a)=\{c,d,f,g\}$ where $\{c,d,f\}$ induces an independent set in $\G_2$, a contradiction. On the other hand, if $a$ or $b$, say $a$, is adjacent to another vertex $g$, then $N_{\G_2}(b)=\{c,d,e,g\}$ with $\{c,d,e\}$ an independent set in $\G_2$, which is again a contradiction.

(ii) $a$ and $c$ are adjacent to two distinct new vertices $e$ and $f$, respectively. If $e$ and $f$ are adjacent, then $e,f\in M_{(\G_2)_2}(v)$, which contradicts the choice of $v$ as $(\G_2)_2\cong\G$. Hence, we may assume that there is no edges from $N_\G(a)\setminus\{v,b\}$ to $N_\G(c)\setminus\{v,d\}$. If $e$ is adjacent to a new vertex $g$, then $N_{(\G_2)_2}(d)=\{c,e,g,v\}$ with $\{c,g,v\}$ an independent set in $(\G_2)_2$, which is impossible. Hence, $N_\G(e)=\{a\}$ and similarly $N_\G(f)=\{c\}$. Since, the subgraph induced by $v,a,b,c,d,e,f$ is not a self $2$-distance graph, we may assume that $a$ is adjacent to another vertex $g$. But then $e$ and $g$ are adjacent and hence $N_{\G_2}(b)=\{c,d,e,g\}$ with $\{d,e,g\}$ an independent subset in $\G_2$, which is a contradiction.

Finally, assume that only one of the vertices $a,b,c,d$ is adjacent to a vertex other than $v,a,b,c,d$, say $a$ is adjacent to a new vertex $e$. If $a$ is adjacent to another vertex $f$, then as before $N_{\G_2}(b)=\{c,d,e,f\}$ with $\{d,e,f\}$ an independent subset in $\G_2$, which is a contradiction. Hence $N_\G(a)=\{v,b,e\}$ so that $e$ is adjacent to a vertex $f$, from which we obtain a diamond $\D(c,d;e,f)$ in $(\G_2)_2$, which is a contradiction. The proof is complete.
\end{proof}
\section{Open problems}
We devote the last section of this paper to some open problems arising in our study of self $2$-distance graphs. The following conjecture, if it is true, can be applied to shorten our proofs, and also will be useful while studying self $2$-distance graphs with other forbidden subgraphs.
\begin{conjecture}
Every self $2$-distance graph is $2$-connected.
\end{conjecture}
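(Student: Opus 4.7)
The plan is to argue by contradiction: assume $\G$ is a self $2$-distance graph that is either disconnected or has a cut vertex. The disconnected case can first be handled by observing that edges of $\G_2$ cannot cross components of $\G$, so every non-trivial component of diameter $\geq 2$ in $\G$ contributes at least one --- and in the bipartite case, at least two --- components to $\G_2$. Matching the multiset of components on both sides should quickly force $\G$ to be connected, modulo trivial isolated vertices that must themselves be absent by the same bookkeeping.

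For the cut-vertex case, suppose $v$ is a cut vertex of $\G$ and let $A_1, \ldots, A_k$ ($k \geq 2$) be the components of $\G - v$. The key local observation is that any edge of $\G_2$ joining $A_i$ and $A_j$ for $i \neq j$ must lie entirely inside $N_\G(v)$, since the only $\G$-paths of length $2$ between distinct $A_i$ pass through $v$. Writing $A_i^\circ = A_i \setminus N_\G(v)$, the set $N_\G[v]$ separates the sets $A_i^\circ$ in $\G_2$. Transporting this small separator through an isomorphism $\G_2 \cong \G$ yields a small separator on the $\G$ side as well, and repeating the argument aims at iterating down to a graph falling under one of the classified cases of Theorems \ref{C4free} and \ref{disjoint-triangles}, or to one of the two exceptional graphs in Theorem \ref{diamond-free}; all of these are manifestly $2$-connected, giving the desired contradiction.

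The main obstacle will be the degenerate subcase where all but one $A_i^\circ$ is empty, that is, where $v$ dominates most of $\G$ at distance at most one; here the small-separator strategy gives no leverage. A plausible remedy is to introduce a monotone invariant sensitive to the cut-vertex structure --- for example, the pair $(k, \min_i |A_i|)$ ordered lexicographically --- and show that it strictly decreases when passing from $\G$ to $\G_2$ whenever $\G$ has a cut vertex. Proving such a monotonicity will likely require a detailed analysis of the degree of $v$ in $\G_2$ and of how its neighborhood structure shifts, together with the forbidden-subgraph techniques developed in the earlier sections.
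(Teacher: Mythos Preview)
The statement you are attempting to prove is not a theorem in the paper: it is listed as Conjecture~1 in Section~6 (Open problems), and the authors explicitly say ``The following conjecture, if it is true, can be applied to shorten our proofs.'' There is no proof in the paper to compare your proposal against.

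Your proposal is, as you yourself acknowledge, only a plan with a declared obstacle (the case where all but one $A_i^\circ$ is empty), and the suggested remedy via a monotone invariant is speculative rather than established. So this is not yet a proof, and since the paper offers none either, there is nothing to match. Two further remarks on the content of your sketch. First, the disconnected case does not go through as stated: if disconnected graphs are allowed, then any disjoint union of self $2$-distance graphs (for instance $C_5\cup C_5$) is again a self $2$-distance graph, so the conjecture must be read under an implicit connectedness hypothesis, and your ``multiset of components'' bookkeeping cannot by itself rule out such examples. Second, your reduction strategy for the cut-vertex case appeals to Theorems~\ref{C4free}, \ref{disjoint-triangles}, and \ref{diamond-free}, but those results classify self $2$-distance graphs under specific forbidden-subgraph hypotheses ($C_4$-free, vertex-disjoint triangles, diamond-free); you have not explained why iterating your separator argument would land in any of those classes, so the final step of the plan is not grounded.
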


A graph $\G$ with $v$ vertices is \textit{strongly regular} of degree $k$ if there are integers $\lambda$ and $\mu$ such that every two adjacent vertices have $\lambda$ common neighbors and every two non-adjacent vertices have $\mu$ common neighbors. The numbers $v,k,\lambda,\mu$ are the parameters of the corresponding graph.
\begin{theorem}
Every strongly regular self $2$-distance graphs is a self-complimentary graph and has parameters $(4t+1,2t,t-1,t)$ where the number of vertices is a sum of two squares.
\end{theorem}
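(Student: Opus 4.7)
The plan is to reduce the self $2$-distance condition to self-complementarity via Lemma~\ref{diameter}, read off the parameter form from the self-complementary constraint, and finally derive the sum-of-two-squares condition by an explicit conference-matrix construction. First I would argue that $\diam(\G)=2$: a disconnected strongly regular graph is a disjoint union of cliques $mK_{k+1}$, whose $2$-distance graph has no edges at all, so it cannot be isomorphic to $\G$ unless $\G$ is itself edgeless, a degenerate case that is discarded. Hence $\G$ is connected with $\mu\geq1$, every two non-adjacent vertices share a common neighbour, and $\diam(\G)=2$. Lemma~\ref{diameter} then yields $\G_2=\G^c$, and combined with $\G\cong\G_2$ this gives $\G\cong\G^c$, i.e.\ $\G$ is self-complementary. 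Since the complement of a strongly regular graph with parameters $(v,k,\lambda,\mu)$ is strongly regular with parameters $(v,v-k-1,v-2k+\mu-2,v-2k+\lambda)$, equating the two parameter tuples forces $v=2k+1$ and $\mu=\lambda+1$; writing $\mu=t$ gives the claimed form $(4t+1,2t,t-1,t)$.

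For the sum-of-two-squares condition, the strong-regularity identity $A^2=kI+\lambda A+\mu(J-I-A)$ specialises under the above parameters to $A^2+A=tI+tJ$. Setting $C:=2A+I-J$, a short computation using $AJ=2tJ$ and $J^2=vJ$ shows that $C$ is symmetric with zero diagonal and $\pm1$ off-diagonal entries, that $CJ=0$, and that $C^2=vI-J$. Bordering $C$ by a row and column of ones yields a symmetric $\{0,\pm1\}$-matrix $\tilde C$ of order $v+1=4t+2$ satisfying $\tilde C^2=v\,I_{v+1}$; that is, $\tilde C$ is a symmetric conference matrix of order $4t+2$. The classical existence theorem for symmetric conference matrices, going back to Belevitch, then asserts that $v=4t+1$ must be a sum of two squares, completing the proof.

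The main obstacle is not in the above outline but in the classical conference-matrix existence theorem itself, whose proof rests on rational congruence of integral quadratic forms and a Hasse--Minkowski-type local analysis; the intention is to cite this result rather than reproduce its proof. Everything else is routine: the reduction through Lemma~\ref{diameter}, the parameter extraction from self-complementarity, and the matrix identity that produces $\tilde C$ are all direct.
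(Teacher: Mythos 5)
Your argument is correct and follows essentially the same route as the paper: reduce to self-complementarity via the diameter-two observation together with Lemma~\ref{diameter}, and then obtain the conference-graph parameters $(4t+1,2t,t-1,t)$ and the sum-of-two-squares condition from the classical theory of symmetric conference matrices. The paper compresses all of this into a citation of Seidel's survey, so your proposal merely supplies the details that the reference is meant to cover --- including the dismissal of the disconnected case, which the paper's blanket claim that strongly regular graphs have diameter at most two quietly ignores, and the standard identity $k(k-\lambda-1)=(v-k-1)\mu$ needed to pin down $k=2t$ from $v=2k+1$ and $\mu=\lambda+1$.
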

\begin{proof}
The result follows from \cite{jjs} and the fact that every strongly regular graph has diameter at most two.
\end{proof}

We have shown, in Corollary \ref{cubic}, that there is no self $2$-distance cubic graph. Indeed, we believe that a more general case also holds for regular graphs with odd degrees while the same result cannot hold for regular graphs of even degrees by the above theorem.
\begin{conjecture}
There are no regular self $2$-distance graphs of odd degree.
\end{conjecture}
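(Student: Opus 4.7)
The plan is to split on the diameter of $\G$, dispose of the small-diameter case by a self-complementary argument, and propose a global identity to attack the remaining case.

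Let $\G$ be a $k$-regular self $2$-distance graph with $k$ odd and $n=|V(\G)|$. The handshake lemma forces $nk$ to be even, so $n$ must be even. First, if $\diam(\G)=2$ then by Lemma~\ref{diameter} we have $\G_2=\G^c$, and the isomorphism $\G\cong\G_2$ makes $\G$ self-complementary; comparing edge counts yields $nk/2=|E(\G)|=|E(\G^c)|=\binom{n}{2}-nk/2$, whence $k=(n-1)/2$ and $n=2k+1$ is odd, contradicting the evenness of $n$. The case $\diam(\G)=1$ is impossible since then $\G=K_n$ and $\G_2$ is empty. Hence $\diam(\G)\geq 3$.

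Under this assumption, for each $v\in V(\G)$ the sets $N_\G(v)$ and $N_{\G_2}(v)$ are disjoint $k$-element subsets of $V(\G)\setminus\{v\}$ (vertices at distance exactly one and exactly two), so the closed ball of radius two about $v$ contains $2k+1$ vertices, and $\diam(\G)\geq 3$ forces $n\geq 2k+2$. The next step is to transfer local structural information between $\G$ and $\G_2$: any two non-adjacent neighbours of $v$ in $\G$ share $v$ as common neighbour, so the complement of the induced subgraph on $N_\G(v)$ embeds into $\G_2$; dually, the complement of the induced subgraph on $N_{\G_2}(v)$ embeds into $(\G_2)_2\cong\G$. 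One would then combine Lemma~\ref{linegraphdistancegraph} applied to both $\G$ and $\G_2$ with an analogous count of induced claws (which produce triangles in $\G_2$) into a single global identity, and try to extract a contradiction modulo two from the parities of $k$ and $n$.

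The main obstacle is identifying the correct parity invariant. For $k=3$, Corollary~\ref{cubic} succeeds via a concrete forbidden induced subgraph, but for arbitrary odd $k$ no purely local configuration is likely to obstruct every example, so a genuinely global invariant is needed. A promising spectral alternative exploits that $\G$ and $\G_2$ share their entire spectrum while their adjacency matrices $A,A'$ satisfy $A\circ A'=0$ off the diagonal and $A+A'$ is $2k$-regular; equating the higher traces $\mathrm{tr}(A^\ell)=\mathrm{tr}((A')^\ell)$ modulo small primes, or analysing the spectrum of $A+A'$, may furnish the required obstruction. Finding the precise invariant is the truly difficult step, and is presumably the reason the authors leave the statement only as a conjecture.
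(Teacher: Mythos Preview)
The paper does not prove this statement: it appears in Section~6 as an open problem, with Corollary~\ref{cubic} (the cubic case $k=3$) offered as the only supporting evidence. There is therefore no proof in the paper against which to compare your attempt.

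Your proposal is not a proof either, and to your credit you say so explicitly in the final paragraph. The reduction you give for $\diam(\G)\le 2$ is correct and clean: the handshake lemma forces $n$ even when $k$ is odd, while self-complementarity in the diameter-$2$ case forces $n=2k+1$ odd, a contradiction. This genuinely disposes of the small-diameter case and is a modest observation not recorded in the paper. For $\diam(\G)\ge 3$, however, you only sketch possible strategies---a parity identity built from Lemma~\ref{linegraphdistancegraph} and claw counts, or a spectral argument via $\mathrm{tr}(A^\ell)=\mathrm{tr}((A')^\ell)$---without carrying any of them through, and you correctly identify locating the right global invariant as the crux. As things stand, the conjecture remains open both in the paper and in your write-up; what you have is a partial result (ruling out diameter at most two) together with a research program, not a proof.
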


We note that if the above conjecture is true then for any finite group $G$ and any inversed closed subset $S$ of $G\setminus\{1\}$ of odd size, the sets $S^2\setminus S$ and $S$ belong to different orbits of the poset of subsets of $G$ under the action of automorphism group of $G$.

\end{document}